\newtheorem{theorem}{Theorem}
\theoremstyle{plain}
\newtheorem{conjecture}{Conjecture}
\newtheorem{proposition}{Proposition}
\newtheorem{question}{Question}
\numberwithin{equation}{section}
\begin{document}
	\title[Square-free Extensions of Words]{Square-free Extensions of Words}

	\author{Jaros\l aw Grytczuk}
	\address{Faculty of Mathematics and Information Science, Warsaw University
		of Technology, 00-662 Warsaw, Poland}
	\email{j.grytczuk@mini.pw.edu.pl}
	
	\author{Hubert Kordulewski}
	\address{Faculty of Mathematics and Information Science, Warsaw University
		of Technology, 00-662 Warsaw, Poland}
	\email{apostatajulian@gmail.com}
	
	\author{Bart\l omiej Pawlik}
	\address{Institute of Mathematics, Silesian University of Technology, 44-100 Gliwice, Poland}
	\email{bpawlik@polsl.pl}
	
	\thanks{Research supported by the National Science Center of Poland, grant 2015/17/B/ST1/02660.}

	\begin{abstract}
A word is \emph{square-free} if it does not contain nonempty factors of the form $XX$. In 1906 Thue proved that there exist arbitrarily long square-free words over a $3$-letter alphabet. It was proved recently \cite{GrytczukKN} that among these words there are infinitely many \emph{extremal} ones, that is, having a square in every single-letter extension.

We study diverse problems concerning extensions of words preserving the property of avoiding squares. Our main motivation is the conjecture stating that there are no extremal words over a $4$-letter alphabet. We also investigate a natural recursive procedure of generating square-free words by a single-letter right-most extension. We present the results of computer experiments supporting a supposition that this procedure gives an infinite square-free word over any alphabet of size at least three.

	\end{abstract}
	
	\maketitle
	
	\section{Introduction}
	
	A \emph{square} is a finite nonempty word of the form $XX$. For instance, the word $\mathtt{hotshots}$ is a~square with $X=\mathtt{hots}$. A word $W$ \emph{contains} a square if it can be written as $W=UXXV$ for some words $U,V$, and a nonempty word $X$. A word is \emph{square-free} if it does not contain any squares. For instance, the word $\mathtt{repetition}$ contains the square $\mathtt{titi}$, while $\mathtt{recreation}$ is square-free.
	
	 It is easy to check that there are no binary square-free words of length greater than $3$. However, there exist ternary square-free words of any length, as proved by Thue in \cite{Thue} (see~\cite{BerstelThue}). This result is the starting point of Combinatorics on Words, a wide discipline with many exciting problems, deep results, and important applications (see \cite{AlloucheShallit,BeanEM,BerstelPerrin,Currie TCS,GrytczukDM,Lothaire,Lothaire Algebraic}).
	
	In this paper we study problems concerning \emph{square-free extensions} of words, a new concept introduced recently in \cite{GrytczukKN}. Let $\mathbb A$ be a fixed alphabet and let $W$ be a finite word over $\mathbb A$. The~set of all finite words over $\mathbb{A}$ is denoted by~$\mathbb{A}^\star$. An \emph{extension} of $W$ over $\mathbb{A}$ is any word of the form $W'xW''$, where $x\in \mathbb A$ and $W=W'W''$. For instance, the word $\mathtt{bear}$ is an extension of the word $\mathtt{bar}$ by inserting the~letter $\mathtt{e}$ between letters $\mathtt{b}$ and $\mathtt{a}$. A square-free word $W$ is called \emph{extremal} over $\mathbb A$ if there is no square-free extension of $W$. For instance, the word $$H=\mathtt {1231213231232123121323123}$$is the shortest extremal word over alphabet $\{\mathtt{1,2,3}\}$. This means that inserting any letter from the alphabet $\{\mathtt{1,2,3}\}$ at any position in the word $H$, including the beginning as well as the end of $H$, results with a square.
	
	A natural intuition is that extremal words should be rare or even should have bounded length. However, in the case of a $3$-letter alphabet, the said intuition turned out to be untrue.
	
	\begin{theorem}[Grytczuk, Kordulewski, Niewiadomski \cite{GrytczukKN}]\label{Theorem Extremal 3 Words}
	There exist infinitely many extremal words over a $3$-letter alphabet.
	\end{theorem}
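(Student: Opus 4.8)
The plan is to construct an explicit infinite family of extremal ternary words by ``stacking'' copies of the minimal extremal word $H$. Write $\mathbb A=\{\mathtt 1,\mathtt 2,\mathtt 3\}$ throughout. First I would record the two facts about $H$ that are established by a finite computation: that $H$ is square-free, and that $H$ is extremal, i.e.\ for every factorization $H=H'H''$ and every $x\in\mathbb A$ the word $H'xH''$ contains a square. Both are finite checks on a word of length $25$, so I would treat them as given and build the infinite family on top of them.

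The key step is to exhibit a fixed ``connector'' word $B$ over $\mathbb A$, of bounded length and found by computer search, such that, setting
\[
W_k=\underbrace{H\,B\,H\,B\,\cdots\,B\,H}_{k\text{ copies of }H},
\]
each $W_k$ is again square-free. This is where the real work lies, because a concatenation of square-free words need not be square-free, so one must rule out squares that straddle a seam. I would prove a gluing lemma: any square occurring in $W_k$ is either short enough to lie inside a single block $HB$ (excluded by a finite check on $HB$ and on the short windows around the seams), or long enough that the period of the square forces a full repetition of a block, which in turn would produce a square inside one copy of $H$, contradicting square-freeness of $H$. The specific combinatorial features of $H$---which short factors it does and does not contain, and the ``rigidity'' of $H$ near its two ends---are what make such a lemma go through; this is the step I expect to be the main obstacle, since ruling out long squares crossing several seams needs a careful period analysis rather than a generic concatenation argument, and $B$ must be chosen so that both this lemma and the boundary check below succeed.

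Granting square-freeness of $W_k$, extremality is comparatively routine. A single-letter extension of $W_k$ inserts a letter $x\in\mathbb A$ at some position $p$. If $p$ lies strictly inside one of the copies of $H$, then the corresponding factor of the extended word is exactly a single-letter extension $H'xH''$ of $H$, which contains a square by extremality of $H$, and that square survives in the extension of $W_k$. If $p$ lies in a connector $B$ or at an $H$--$B$ or $B$--$H$ boundary, then by construction the relevant neighbourhood of $p$ in $W_k$ is the \emph{same} finite word for every $k$ (a fixed suffix of $H$, then $B$, then a fixed prefix of $H$), so the claim that a square is created reduces to a single finite verification that does not depend on $k$. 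Hence every $W_k$ is extremal, and since $|W_{k+1}|>|W_k|$ the words $W_k$ are pairwise distinct, giving infinitely many extremal words over $\mathbb A$.

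Two remarks on flexibility of the plan. First, instead of separating consecutive copies of $H$ by a connector $B$ one may instead let the copies overlap in a fixed way (glue a suffix of one copy onto a prefix of the next); the structure of the argument is identical, with the overlap playing the role of $B$, and the search is over short overlap words rather than short connectors. Second, the only genuinely non-finite ingredient is the gluing lemma for square-freeness; everything else is either inherited from Theorem~\ref{Theorem Extremal 3 Words}'s seed word $H$ or a bounded computation, so the proof is essentially ``one structural lemma plus a finite check.''
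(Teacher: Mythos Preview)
Your construction cannot work as written. With a \emph{fixed} connector $B$, the word
\[
W_k=\underbrace{H\,B\,H\,B\cdots B\,H}_{k\ \text{copies of }H}=(HB)^{k-1}H
\]
contains the square $(HB)(HB)$ as a prefix as soon as $k\geqslant 3$. So $W_3$ is already not square-free, and no choice of $B$ can rescue this. The same objection applies verbatim to your overlap variant: gluing copies of $H$ with a fixed overlap produces a word that is eventually periodic with period $|H|-m$, and hence contains arbitrarily long squares. Your sketched ``gluing lemma'' is precisely where the argument breaks: a long square whose period equals $|HB|$ forces a repetition of the whole block $HB$, not a square inside a single copy of $H$, so the intended contradiction with square-freeness of $H$ never materialises.

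The extremality half of your plan is fine (insertions inside a copy of $H$ are handled by extremality of $H$, and insertions at or near a seam reduce to a bounded check), but it is moot without square-freeness. What the cited proof in \cite{GrytczukKN} does differently is exactly to avoid this periodicity: it is a genuinely \emph{recursive} construction (building $W_{k+1}$ from $W_k$ rather than laying down identical blocks), with the correctness of the recursion certified by a finite computer verification. If you want to repair your approach, you must break the periodicity --- for instance by varying the connectors according to a square-free pattern, or by applying a square-free morphism whose images are engineered so that the resulting word inherits extremality --- and then the square-freeness argument becomes a statement about a morphic image or a substitution into a square-free sequence, not about a periodic word.
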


The proof of this theorem is by a recursive construction whose validity is partially based on computer verifications. In \cite{MolRampersad} Mol and Rampersad determined all positive integers $k$ for which there exist extremal ternary words of length exactly $k$. In particular, they proved that for every $k\geqslant87$ there exists an extremal ternary word of length $k$.

One may naturally wonder what the case is for larger alphabets. Actually, we do not know if there are any extremal words over a $4$-letter alphabet. The following conjecture was stated in \cite{GrytczukKN}.

\begin{conjecture}\label{Conjecture 4 Extremal}
Every square-free word over a $4$-letter alphabet can be extended to a square-free word.
\end{conjecture}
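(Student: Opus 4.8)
The plan is to argue by contradiction: assume $W=w_1\cdots w_n$ is an \emph{extremal} square-free word over $\mathbb{A}=\{\mathtt1,\mathtt2,\mathtt3,\mathtt4\}$ and extract, from the failure of \emph{every} single-letter insertion, enough structure to reach a contradiction; since the assertion is still a conjecture, what follows is a line of attack rather than a finished proof. The natural first step is a local characterization of end-extensions: if $W$ is square-free, then $Wx$ is square-free if and only if $W$ has no suffix of the form $ZxZ$ with $Z\in\mathbb{A}^\star$ (allowing $Z$ empty, where the condition just reads $w_n\neq x$). Indeed, any new square in $Wx$ must occur as a suffix $YY$; reading its last letter gives $Y=Zx$, hence $YY=ZxZx$ and $W$ ends in $ZxZ$, while conversely such a suffix plainly yields a square after appending $x$. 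A mirror statement holds for left-extensions $xW$.

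It follows that an extremal word over $\mathbb{A}$ must, for each of the four letters $x$, carry a suffix $Z_x x Z_x$ \emph{and} a prefix $Z'_x x Z'_x$. The four special suffixes have pairwise distinct lengths --- equal lengths would make them the same suffix of $W$ and force equal central letters --- hence are nested. Writing $S=Z_a a Z_a$ for the shortest, with central letter $a$, we get that $W$ ends in $S$ and $Z_a$ is itself a suffix of $W$; pushing this against the second-shortest suffix $Z_b b Z_b\supsetneq S$, and distinguishing whether $S$ lies inside the trailing copy of $Z_b$ or straddles its central $b$, already forces rigid patterns on a long suffix --- and, symmetrically, a long prefix --- of any putative extremal $4$-word. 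This much does not finish the job, since end-extensions alone can be made to fail; interior insertions must be brought in.

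The crux, and the point where the argument currently stalls, is the analysis of \emph{interior} insertions. Inserting $x$ after $w_i$ with $0<i<n$ can create a square $YY$ whose two halves straddle the new letter; such a square has no single-letter anchor, so there is no clean local test paralleling the endpoint case, and the shortest squares witnessing the failure of the various pairs $(i,x)$ overlap and constrain one another in a way that has so far resisted a uniform treatment --- which is exactly why the statement remains a conjecture. Two routes seem plausible. The first is to show that local rigidity propagates inward: every sufficiently long factor of an extremal $4$-word would have to coincide, letter for letter, with a window of some ternary extremal word (otherwise a fourth letter could be inserted inside that factor), after which one invokes the recursive construction behind Theorem \ref{Theorem Extremal 3 Words} together with the length spectrum of ternary extremal words from \cite{MolRampersad} to exclude a globally consistent $4$-ary square-free word. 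The second is an entropy-compression or potential-function argument: exhibit a weight $\Phi$ on square-free words over $\mathbb{A}$ and an explicit rule selecting a position and a letter whose insertion strictly increases $\Phi$ while preserving square-freeness; producing a $\Phi$ that works for \emph{every} $W$ is the obstacle. At present the computational evidence presented here is the principal reason to believe no extremal $4$-word exists, and carrying either route through would be the main result.
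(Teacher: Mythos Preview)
The statement is labeled a \emph{conjecture} in the paper, and the paper offers no proof of it; it records computational evidence and proves related partial results (Proposition~\ref{Proposition 4 Non-entendable}, Theorem~\ref{Theorem Zimin}, Proposition~\ref{Proposition Last t Inner Positions}) but explicitly leaves Conjecture~\ref{Conjecture 4 Extremal} open. So there is no argument in the paper to compare against, and your submission is --- as you yourself say --- ``a line of attack rather than a finished proof.'' Your endpoint analysis (the $ZxZ$-suffix criterion for $Wx$ to contain a square, and the nesting of the four blocking suffixes by length) is correct but elementary, and you rightly identify the interior insertions as the genuine obstacle.

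Your first proposed route, however, contains a step that does not go through as stated. You claim that every sufficiently long factor of an extremal $4$-word would have to coincide with a window of some ternary extremal word, ``otherwise a fourth letter could be inserted inside that factor.'' But a factor of a quaternary square-free word may perfectly well use all four letters; and even when a factor $F$ happens to avoid some letter $x$, inserting $x$ inside $F$ can create a square that extends far beyond $F$ into the surrounding word (where $x$ does occur), so no local ternary extremality of $F$ follows. Indeed, Proposition~\ref{Proposition 4 Non-entendable} and Theorem~\ref{Theorem Zimin} in the paper show exactly how to block all insertions on an arbitrarily long window over four or more letters by arranging long-range squares; the obstruction is global, and nothing in your sketch bridges that. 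The second route (a potential function $\Phi$ increased by some explicit insertion rule) is a recognized paradigm, but you name the obstacle yourself and supply no candidate $\Phi$, so it too remains purely programmatic. In short, the conjecture is open, and nothing here closes it.
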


Actually, we do not even know if there exist any constants $n$ and $N$ such that every square-free word over a $n$-letter alphabet of length at least $N$ can be extended.

In the forthcoming sections we shall present some observations and results of computer experiments inspired by the conjecture above.

\section{Nonchalant words}
\subsection{The main conjectures}
The problem of extremal square-free words is connected to the following recursive construction.

Given a fixed ordered alphabet $\mathbb A$, we start with the first letter from $\mathbb A$ and continue by inserting the earliest possible letter at the rightmost position of the actual word so that the~new word is square-free. For instance, for the alphabet $\{\mathtt{1,2,3}\}$ this greedy procedure starts with the following sequence of square-free words: $$\mathtt {1, 12, 121, 1213, 12131, 121312, 1213121, 12131231}.$$ The last word was obtained by inserting $\mathtt 3$ at the penultimate position of the previous word.

We conjecture that the aforementioned procedure never stops. To state it formally, let us define recursively a sequence of \emph{nonchalant words} $N_i$ over the alphabet $\mathbb A_n=\{\mathtt{1,2,\dots,n}\}$ by putting $N_1=\mathtt 1$, and letting $N_{i+1}=N_i'xN_i''$ to be a square-free extension of $N_i$ such that $N_i''$ is the shortest possible suffix of $N_i$ and $x\in \mathbb A_n$ is the earliest possible letter.

\begin{conjecture}\label{Conjecture Nonchalant}
	The sequence of nonchalant words over $\mathbb A_n$ is infinite for every $n\geqslant 3$.
\end{conjecture}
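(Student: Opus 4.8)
Since Conjecture~\ref{Conjecture Nonchalant} is stated here as an open problem, what follows is a plan of attack rather than a proof. The first point is that the nonchalant procedure halts at step $i$ if and only if $N_i$ is extremal: the word $N_{i+1}$ fails to exist exactly when no single-letter insertion at any position of $N_i$, including the two ends, produces a square-free word. Consequently the conjecture for a given $n$ is equivalent to the assertion that the nonchalant sequence over $\mathbb A_n$ never meets an extremal word. For $n=4$ this is implied by Conjecture~\ref{Conjecture 4 Extremal}, and for $n\ge 5$ by the analogous statement that $\mathbb A_n$ admits no extremal words; so for $n\ge 4$ the claim is not independent of the preceding conjecture, and the real task there is to settle the non-existence of extremal words. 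The genuinely new content lies in the case $n=3$, where by Theorem~\ref{Theorem Extremal 3 Words} extremal words exist in abundance, and one must show that the specific word generated by the greedy rule avoids all of them.

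For $n=3$ the plan is to pin down the limiting behaviour of the sequence. Observe that the shortest suffix $N_i''$ is the empty one, so as long as the current word can be extended at its right end the nonchalant rule merely appends the least letter that keeps the word square-free; it backs up and inserts at an internal position only when the current word is right-maximal. One would therefore, guided by the computer data: (i) track the insertion depth $d_i=|N_i''|$ and the sequence of insertion letters, and try to prove that $d_i$ is bounded, or at worst eventually periodic — a uniform bound on $d_i$ forces the sequence to stabilize position by position to an infinite square-free word $N_\infty$, which already yields the conjecture; and (ii) attempt to identify $N_\infty$ explicitly, in the best case as a morphic or automatic sequence, via a renormalization argument showing that a prefix of $N_i$ of fixed length reappears, suitably rescaled, after a bounded number of steps.

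I expect the main obstacle to be essentially that the conjecture is hard in both regimes. For $n\ge 4$ it is Conjecture~\ref{Conjecture 4 Extremal}, or its extension to larger alphabets, for which no general method is presently known — even the weaker statement that square-free words over $\mathbb A_4$ of sufficiently large length are always extendable is open. For $n=3$ the difficulty is the notorious unpredictability of greedy constructions on words: the limit word need not be morphic, and even bounding the insertion depth appears to require an exact description of which square-free ternary words are right-maximal, and more delicately which are extremal. Finally, a closed form for $N_\infty$ does not by itself establish the conjecture: one must also control the step-by-step dynamics closely enough to see that every intermediate $N_i$ admits a square-free extension. It is for these reasons that we are only able to offer the experimental evidence presented below.
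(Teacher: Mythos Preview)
The statement is a conjecture that the paper leaves open; the paper offers only the one-line observation that the nonchalant algorithm halts exactly when it reaches an extremal word, together with numerical evidence (Table~\ref{nai} and Appendix~\ref{apA}). Your proposal correctly identifies this status, makes the reduction to extremal words explicit, and points out that for $n\ge4$ the conjecture is implied by Conjecture~\ref{Conjecture 4 Extremal} and its higher-alphabet analogues --- a useful remark the paper does not spell out. Isolating $n=3$ as the only case requiring a genuinely new idea is also correct, in light of Theorem~\ref{Theorem Extremal 3 Words}.

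One clarification on your plan~(i) for $n=3$: asserting that the insertion depth $d_i$ is bounded tacitly includes asserting that $d_i$ is \emph{defined} for every $i$, which is already the conjecture. A bound on $d_i$ is really additional information pertinent to the stronger Conjecture~\ref{Conjecture Nonchalant Infinite} (convergence of the sequence), not an independent route to mere infiniteness. The honest formulation of~(i) is: prove that every square-free word reachable by the nonchalant rule admits a square-free extension at depth at most~$D$ --- and that is exactly the structural control you yourself flag as the main obstacle. You do acknowledge this tension in your final paragraph, so the discussion is internally coherent; just be aware that~(i) as phrased is not logically weaker than the conjecture itself.
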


In other words, we believe that the nonchalant algorithm never produces an extremal word. The results of computer experiments supports this conjecture. For instance, for $n=3$ a nonchalant word of length $10000$ was obtained. Moreover, the algorithm never moved back by more that $20$ positions (see Appendix \ref{apA} for details). Therefore the following conjecture seems also plausible.

\begin{conjecture}\label{Conjecture Nonchalant Infinite}
	The sequence of nonchalant words over $\mathbb A_n$ converges to an infinite word $\mathcal{N}_n$ for every $n\geqslant 3$.
\end{conjecture}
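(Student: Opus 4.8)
\emph{Step 1: reduction to bounded backtrack.} The plan is to recast Conjecture~\ref{Conjecture Nonchalant Infinite} as a uniform bound on how far the nonchalant rule ever backtracks, and then to control that bound by confining the procedure to a finite family of bounded windows, with a morphic description of $\mathcal N_n$ (in the spirit of the proof of Theorem~\ref{Theorem Extremal 3 Words}) serving as the explicit witness. Write $N_{i+1}=N_i'\,x\,N_i''$ as in the definition and set $d_i=|N_i''|$, the backtrack depth at step~$i$; note $|N_i|=i$. Two elementary remarks pin down what convergence means. Positions $1,\dots,|N_i'|$ of $N_{i+1}$ agree with those of $N_i$, so a position that is eventually never again the site of a backtrack is frozen forever. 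On the other hand, if $d_i\ge 1$ then the letter at position $|N_i'|+1$ differs between $N_i$ and $N_{i+1}$: it is the first letter $c$ of $N_i''$ in $N_i$ and the inserted letter $x$ in $N_{i+1}$, and $x\ne c$, as otherwise $N_{i+1}$ would contain the one-letter square $xx$. Hence, granting the sequence is infinite, $(N_i)$ converges to an infinite word $\mathcal N_n$ exactly when $|N_i'|=i-d_i\to\infty$. The reported computations (backtracks of length at most $20$ for $n=3$) point to the stronger statement $C_n:=\sup_i d_i<\infty$, which gives $|N_i'|\ge i-C_n\to\infty$ and settles Conjectures~\ref{Conjecture Nonchalant} and \ref{Conjecture Nonchalant Infinite} simultaneously. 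This uniform bound is what I would aim to prove.

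\emph{Step 2: confine the trajectory to a finite window family.} Call a square-free word \emph{right-stuck} if no letter of $\mathbb A_n$ can be appended to it keeping it square-free; the nonchalant rule backtracks precisely when $N_i$ is right-stuck, and then inserts the earliest legal letter at the shallowest legal depth. Fix a length $L=L(n)$ and follow the window $w_i$ consisting of the last $L$ letters of $N_i$. The aim is to produce $L$, a constant $C_n$, and a finite set $\mathcal F$ of square-free words of length $L$ such that: (i) $w_i\in\mathcal F$ for every $i$; (ii) for each $w\in\mathcal F$, the nonchalant transition applied to any $N_i$ with window $w$ backtracks by at most $C_n$; and (iii) this transition sends windows in $\mathcal F$ to windows in $\mathcal F$. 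Point (iii) is what makes (i) self-sustaining. Running the argument requires a \emph{locality lemma}: that whether $N_i$ is right-stuck, and the depth of its cheapest repair, depend only on $w_i$. This is plausible but not obvious, since a single letter can be forbidden by a long-range square; one would either prove it outright or enlarge $\mathcal F$ to a family of longer contexts that is closed under the transition and on which locality holds along the nonchalant trajectory.

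\emph{Step 3: a morphic realisation.} Assuming convergence, $\mathcal N_n$ is a concrete infinite square-free word, and, as is typical for extremal-type constructions, one may hope it is a fixed point of a substitution $\varphi$. For $n=3$ I would guess such a $\varphi$ from the computed prefix, prove $\varphi^\infty(\mathtt 1)$ square-free by a standard finite test (it suffices that $\varphi$ maps square-free words of bounded length to square-free words and is recognisable), and then verify the dynamical statement over all factors of $\varphi^\infty(\mathtt 1)$ of length $O(L)$: appending the letter prescribed by the morphic structure keeps square-freeness whenever \emph{any} legal append exists, and otherwise the minimal-suffix, earliest-letter repair has depth at most $C_3$ and again lies on $\varphi^\infty(\mathtt 1)$. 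With such a $\varphi$ in hand, the set $\mathcal F$ of Step~2 is essentially the set of length-$L$ factors of $\mathcal N_n$, and closedness of the window family is a finite check. For $n\ge 4$ the same scheme applies, and it is the natural test bed for Conjecture~\ref{Conjecture 4 Extremal}.

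\emph{The main obstacle.} Everything rests on excluding an unbounded \emph{cascade}: a repair that removes the square at the right end of $N_i$ but rewrites a few letters and leaves a word that is again right-stuck and demands a strictly deeper repair, and so on without bound. There is no visible monotone potential forbidding this --- the length only grows, and the backtrack depth is precisely the quantity one is trying to control --- so point (iii) of Step~2, closedness of the window family under the nonchalant transition (equivalently, that the minimal repair never escapes the morphic structure), is the real difficulty, and it may require the substitution $\varphi$ even to formulate. One should note that Conjecture~\ref{Conjecture Nonchalant} is itself open, with only computational support, while Conjecture~\ref{Conjecture Nonchalant Infinite} is strictly stronger; a sensible intermediate target is to fix $C=C_n$ in advance and prove that the \emph{depth-$\le C$ nonchalant procedure} --- which only ever inserts within the last $C$ positions --- never gets stuck, a single statement about a finite automaton acting on length-$O(C)$ windows that would yield both conjectures at once.
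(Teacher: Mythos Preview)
The statement you are attempting is labelled a \emph{conjecture} in the paper, and the paper offers no proof of it whatsoever: the surrounding text presents only computer experiments (backtracks of at most $20$ through $10\,000$ iterations for $n=3$) as support. So there is no ``paper's own proof'' to compare against, and your write-up should not be read as a proof but as a research programme --- which, to your credit, you make explicit at several points.

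As a programme, your Step~1 is sound: the equivalence between convergence and $i-d_i\to\infty$ is correct, and the sufficient condition $\sup_i d_i<\infty$ is indeed the natural target suggested by the data. Steps~2 and~3, however, are not proofs but wishes. The ``locality lemma'' you need in Step~2 is genuinely false in general --- whether a word is right-stuck, and where the shallowest legal insertion lies, can depend on arbitrarily long suffixes, since the square created by appending a letter can have half-length up to $|N_i|/2$. You acknowledge this and propose to ``enlarge $\mathcal F$'', but that is exactly the point where the argument has no content yet: without a concrete $\varphi$ and a verified recognisability/synchronisation property, there is no reason the window family stays finite or closed. Likewise, in Step~3 you \emph{hope} $\mathcal N_n$ is morphic; nothing in the paper (or, to my knowledge, in the literature) establishes this, and guessing $\varphi$ from a prefix is heuristic, not proof.

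In short: you have correctly reformulated the conjecture and identified the central obstacle (unbounded cascades), but you have not supplied any mechanism to rule them out. That is consistent with the status of the problem --- it remains open --- but the document should be presented as a proof \emph{strategy}, not a proof.
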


Here are the first $70$ terms of the presumably infinite limit word for $n=3$: $$\mathcal{N}_3=\mathtt{1213123132123121312313231213123212312131231321231213123212312132123132...}$$

\subsection{Playing with initial words}
The above version of nonchalant algorithm with the two corresponding conjectures were stated in~\cite{GrytczukKN}. Our numerical experiments led us to introduce more general approach.

Firstly, let us consider the nonchalant algoritm which not necessarily starts with the~letter~$\mathtt{1}$. Namely, let the~nonchalant word $N_1$ be some square-free word over considered alphabet. From now on $N_1$ will be called the {\it initial word} of the nonchalant algorithm. The results of testing 10 000 iterations of the nonchalant algorithm for various initial words are prezented in Table \ref{nai}. First column contains the initial words, while the other columns shows how many times the nonchalant algorithm moved back by the given number of positions (the column initialized by 0 shows how many times algoritm puts a letter at the rightmost position, by 1 - at the penultimate position, etc.). What is worth noticing, experimental results suggestes, that the outcomes of the algorithm contains a lot of similarities. For example, for each considered initial word, the nonchalant algorithm inserts a new letter 33 to 36 times right before the suffix of length 4 (more results are presented in Appendix \ref{apA}).

\begin{table}\centering
	\begin{tabular}{|c||ccccccccccc|}\hline
	    $N_1$&0&1&2&3&4&7&9&12&14&15&20\\ \hline
	    $\mathtt{1}$&9457&310&184&1&33&11&1&0&0&1&2\\
	    $\mathtt{2}$&9457&309&186&1&33&11&0&0&1&0&2\\
	    $\mathtt{3}$&9457&307&185&0&34&13&1&0&1&0&2\\
	    $\mathtt{13}$&9454&310&185&0&34&13&1&0&1&0&2\\
	    $\mathtt{23}$&9458&307&185&1&34&11&0&1&1&0&2\\
	    $\mathtt{32}$&9458&309&185&1&33&11&0&0&1&0&2\\
	    $\mathtt{3213}$&9455&309&185&0&34&13&1&0&1&0&2\\
	    $\mathtt{2313}$&9457&310&185&0&34&11&1&0&1&0&1\\
	    $\mathtt{32132}$&9460&307&182&1&33&14&0&0&1&0&2\\
	    $\mathtt{2313213}$&9455&309&185&0&34&13&1&0&1&0&2\\
	    $\mathtt{231323}$&9457&308&183&0&35&13&1&0&1&0&2\\
	    $\mathtt{32132313}$&9461&307&182&1&36&11&1&0&1&0&0\\
	    $\mathtt{321323132}$&9461&307&182&1&36&11&1&0&1&0&0\\
	    $\mathtt{32132313213}$&9455&309&185&0&34&13&1&0&1&0&2\\
	    \hline
	\end{tabular}
\vskip0.5cm
		\caption{The number of letters omitted through the single iteration of the~nonchalant algorithm for various initial words (10 000 iterations).}\label{nai}
\end{table}

\subsection{Nonchalant words over four letters}
In case of a $4$-letter alphabet the situation looks even more exciting. In our experiments for the initial word $\mathtt{1}$ the~nonchalant algorithm never moved back by more than one position. To be precise, through 50 000 iterations algorithm extended the word on the penultimate position only 33 times (in other cases, algorithm extended the word at the last posistion).

The square-free word $W$ over given alphabet $\mathbb{A}$ is \emph{almost extremal} if for every nonempty words $W'$ and $W''$ such that $W=W'W''$, the word $W'xW''$ contains a square for any $x\in\mathbb{A}$.  Given that, let us consider another variant of the nonchalant algorithm. Namely, let $\mathtt{12}$ be the initial word of the nonchalant algorithm over $\mathbb{A}_4$ and let us allow to extend the word only at internal positions (in this variant of nonchalant algorithm extending the word on the rightmost position is forbidden). Such procedure starts with the following sequence of square-free words: $$\mathtt {12, 132, 1312, 13142, 131412, 1314132, 13141312, 131413212}.$$
The last word was obtained by inserting $\mathtt{2}$ right before the suffix $\mathtt{12}$. Through 50~000 iterations, the~algorithm never moved back by more than two positions (in this case the number of iterations in which algorithm moved back by two positions is approximately equal to 10\% of all iterations).

\subsection{Extensions close to the ends}

These experiments led us to the following two suppositions that perhaps: (1) every quaternary square-free word can be extended at the end or at the penultimate position, and (2) every quaternary square-free word (of lenght at least 3) can be extended at one of the two rightmost internal positions. However, both suppositions turned out not to be true.

\begin{proposition}\label{Proposition 4 Non-entendable} There exists a quaternary square-free word $S$ which cannot be extended, neither at the last, nor at the penultimate position. 
\end{proposition}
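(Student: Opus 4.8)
The plan is to exhibit an explicit quaternary square-free word $S$ and verify the two required properties by direct inspection. Since the statement is purely existential, a single well-chosen example suffices; the real work is locating such an $S$ and organizing the verification so it is humanly checkable rather than a brute-force appeal to a computer.

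First I would search (by hand or by a short computer scan over square-free words of modest length, say up to $20$–$30$ letters over $\{\mathtt 1,\mathtt 2,\mathtt 3,\mathtt 4\}$) for a word $S=s_1s_2\cdots s_m$ such that: (a) appending any letter $x\in\{\mathtt 1,\mathtt 2,\mathtt 3,\mathtt 4\}$ to the end creates a square, and (b) inserting any letter $x$ between $s_{m-1}$ and $s_m$ creates a square. For (a), note that appending $x$ creates a square exactly when some suffix of $Sx$ has the form $YY$, i.e. when for some $k\le m$ we have $s_{m-k+2}\cdots s_m x = s_{m-2k+2}\cdots s_{m-k+1}$; in particular $x=s_m$ is always forbidden (that would be the square $s_m s_m$ unless... actually $s_m s_m$ is itself a square of length $2$), so the genuine constraints are on the remaining three letters, and each is killed by a square of some length $2k$ ending at position $m$. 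Similarly for (b): inserting $x$ before the last letter yields $s_1\cdots s_{m-1}\,x\,s_m$, and one checks which choices of $x$ force a square straddling or ending at the insertion point. I would record, for each of the (at most) three viable letters in each of the two positions, the specific square it produces, displaying the relevant factor explicitly.

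The verification then splits into three routine-but-nontrivial checks: (i) $S$ itself is square-free — best done by checking that no factor $YY$ occurs, which for a word of length $\sim 20$ means inspecting all factors of even length up to $m$; (ii) each of the $4$ end-extensions $Sx$ contains a square, with the witnessing square exhibited; and (iii) each of the $4$ penultimate-position extensions $s_1\cdots s_{m-1}xs_m$ contains a square, again with witnesses. I expect (i) to be the most error-prone step to write out cleanly, and locating the word $S$ in the first place to be the genuine obstacle — it must simultaneously be extremal-like "on the right two positions" while still being square-free elsewhere, which is a fairly delicate combinatorial coincidence; once a valid candidate is in hand, steps (ii) and (iii) are immediate from the displayed squares. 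I would present the final proof simply as: "Let $S=\ldots$. One checks that $S$ is square-free. Appending $\mathtt 1$ gives the square $\ldots$, appending $\mathtt 2$ gives $\ldots$, etc.; inserting $\mathtt 1$ before the last letter gives $\ldots$, and so on. Hence $S$ has the claimed property."
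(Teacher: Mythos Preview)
Your plan matches the paper's approach in spirit---exhibit an explicit $S$ and verify---but as written it is only a plan: the actual word is missing, and that word \emph{is} the proof. Two concrete points where your expectations diverge from what actually happens:

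First, length. You propose scanning square-free words ``up to $20$--$30$ letters''. The paper's witness has length $67$:
\[
S=\mathtt{4231213124121312143121312412131231423121312412131214312131241213121}.
\]
I do not know whether a shorter example exists, but you should not assume one does; your search range may simply be too small to find anything.

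Second, and more importantly, the paper does not find $S$ by brute search. It \emph{engineers} $S$ from blocks $A=\mathtt{1213121}$, $B=\mathtt{121312}$, $Y=\mathtt{3}B\mathtt{4}$, $Z=\mathtt{41}YA\mathtt{4}YB\mathtt{341}$, setting $S=ZYA\mathtt{4}YA$. This structure is exactly what makes your steps (ii) and (iii) clean: the suffix $A=\mathtt{1213121}$ already kills end-extensions by $\mathtt{1},\mathtt{2},\mathtt{3}$ and penultimate insertions of $\mathtt{1},\mathtt{2}$; appending $\mathtt{4}$ gives $Z(YA\mathtt{4})(YA\mathtt{4})$ by design; and the two remaining penultimate cases ($\mathtt{3}$ and $\mathtt{4}$) produce the squares $(\mathtt{41}YA\mathtt{4}YB\mathtt{3})^2$ and $(B\mathtt{4})^2$ respectively, again by the block structure. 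Step (i), square-freeness of $S$, is still delegated to a computer even in the paper---so your hope of making everything ``humanly checkable'' is probably too optimistic for a word of this size.

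In short: right strategy, but the missing idea is to \emph{build} $S$ so that the eight forbidden extensions each create a square for a structural reason, rather than to search and then verify ad hoc.
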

\begin{proof} 
	Let $A = \mathtt{1213121}$ and $B = \mathtt{121312}$. Next, let $Y = \mathtt{3}B\mathtt{4}$ and $Z = \mathtt{41}YA\mathtt{4}YB\mathtt{341}$. Finally, put $S=ZYA\mathtt{4}YA$, which gives the word: $$S=\mathtt{4231213124121312143121312412131231423121312412131214312131241213121}.$$
	It can be verified (by a computer) that $S$ is indeed square-free. Now, $A\mathtt{x}$ contains a square for every $\mathtt{x}\in \{\mathtt{1,2,3}\}$. Also $S\mathtt{4}=Z(YA\mathtt{4})(YA\mathtt{4})$ is not square-free. For the penultimate position it suffices to check only letters $\mathtt{3}$ and $\mathtt{4}$. So, the suffix $A$ in $S$ will turn to one of the~forms, $B\mathtt{31}$ or $B\mathtt{41}$, respectively. In the latter case we get the word
	\begin{equation}
	ZYA\mathtt{4}YB\mathtt{41}=ZYA\mathtt{4}\mathtt{3}(B\mathtt{4})(B\mathtt{4})\mathtt{1}.
	\end{equation}
	 In the former case we obtain
	 \begin{equation}
	 ZYA\mathtt{4}YB\mathtt{31}=(\mathtt{41}YA\mathtt{4}YB\mathtt{3})(\mathtt{41}YA\mathtt{4}YB\mathtt{3})\mathtt{1}.
	 \end{equation}
	 The assertion is proved.
	\end{proof}

In the case of the second supposition, we will present more general result. We use the well known \emph{Zimin words} $Z_n$, defined recursively over an infinite alphabet of variables $\{x_1,x_2,x_3,\dots\}$ by $Z_1=x_1$ and $Z_{n}=Z_{n-1}x_nZ_{n-1}$ for every $n\geqslant2$. In the following proposition, the construction in the proof is more clear when we analyse the \emph{leftmost} internal positions instead of the \emph{rightmost} ones. Obviously, the result holds in the latter case as well.

\begin{theorem}\label{Theorem Zimin}
	For every natural numbers $n$ and $t$, with $n\geqslant4$ and $1\leqslant t<n$, there exists a~square-free word $W$ over the alphabet $\mathbb{A}_n$ which is non-extendable at any of its $t$ inner leftmost positions. 
\end{theorem}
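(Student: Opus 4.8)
The plan is to build $W$ explicitly by nesting copies of a fixed short square-free ``gadget'' (the word $A=\mathtt{1213121}$ used in Proposition~\ref{Proposition 4 Non-entendable}) around a Zimin-type skeleton, in such a way that blocking each of the $t$ leftmost internal positions forces a square. First I would recall the key local fact used already: the suffix $A=\mathtt{1213121}$ cannot be extended by $\mathtt1$, $\mathtt2$, or $\mathtt3$ on its right, since $A\mathtt1$, $A\mathtt2$, $A\mathtt3$ each end in a square; by symmetry a prefix equal to (a renaming of) $A$ cannot be \emph{preceded} by three of the four letters. So if $W$ begins with $\mathtt4\,A'$ where $A'$ is a copy of $A$, then inserting a letter in front of position $1$ or $2$ is forced to be $\mathtt4$ in the relevant slots — and then we only have to defeat the single letter $\mathtt4$ at each such position. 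This is exactly the mechanism of the Proposition, and the Zimin structure $Z_n=Z_{n-1}x_nZ_{n-1}$ is what lets us iterate it: each time a forced $\mathtt4$ is inserted, the doubled block $Z_{n-1}$ around it becomes an honest square $X X$.

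The key steps, in order, are: (i) Fix the alphabet $\mathbb A_n$ with $n\geqslant 4$ and, for a target $t<n$, choose a Zimin word $Z_{t+1}$ and substitute into its variables $x_1,\dots,x_{t+1}$ concrete square-free blocks over $\mathbb A_n$ (using the gadget $A$ and the ``spacer'' letters $\mathtt3,\mathtt4,\dots$) so that the resulting word $W$ is square-free; squarefreeness of $W$ is checked by a finite computation, as in Proposition~\ref{Proposition 4 Non-entendable}. (ii) Arrange the first few letters of $W$ so that, for each $j\in\{1,\dots,t\}$, the window around internal position $j$ looks locally like $\cdots c_j \,(\text{copy of }A)\cdots$ with $c_j$ a spacer letter; the ``no-extension-by-three-letters'' property of $A$ then restricts the letter insertable at position $j$ to a single candidate. (iii) For that one remaining candidate letter at position $j$, exhibit the square it creates: by the nested Zimin construction the insertion completes a factor of the form $X X$ where $X$ is the $j$-th level block $Z_{t+1-j}$ (suitably instantiated), analogously to the two displayed equations in the proof of Proposition~\ref{Proposition 4 Non-entendable}. (iv) Finally note that the claim for the leftmost positions transfers to the rightmost positions by reversing the word, and that we used only $t+1\leqslant n$ distinct letters, so the hypothesis $t<n$ is exactly what the construction needs.

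The main obstacle I expect is step (i) together with the bookkeeping in step (iii): we must simultaneously (a) keep $W$ globally square-free while (b) making sure that \emph{each} of the $t$ leftmost internal slots is both letter-restricted and square-forced, and these requirements pull against each other because the blocks that create the squares are long and overlap. The clean way around this is to not optimize length at all: take the blocks large and pairwise ``generic'' (e.g.\ separated by distinct spacer letters $\mathtt3,\mathtt4,\ldots$ so that no accidental long repetition appears), and let the recursion $Z_k = Z_{k-1} x_k Z_{k-1}$ do the combinatorial work, peeling off one position per Zimin level. A secondary but genuine point of care is that ``non-extendable at position $j$'' must be verified for \emph{all} letters of $\mathbb A_n$, not just the one ruled in by the $A$-gadget; for the other letters one checks directly that a short square appears immediately to the left (exactly the ``$A\mathtt x$ contains a square'' step), and for the ruled-in letter one points to the long Zimin square. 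Once the block substitution is pinned down, everything else is a finite verification of the same flavour as Proposition~\ref{Proposition 4 Non-entendable}.
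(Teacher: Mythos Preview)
Your plan has two genuine gaps. First, the gadget $A=\mathtt{1213121}$ only rules out the three letters $\mathtt{1},\mathtt{2},\mathtt{3}$: for $n\geqslant5$ there are at least two letters left over at each position, not one, so a single Zimin level per position cannot finish the job as you describe. Second, and more seriously, the ``$t$ inner leftmost positions'' are the $t$ consecutive gaps between letters $w_1,w_2,\dots,w_{t+1}$ of $W$; you cannot arrange that \emph{each} such gap is immediately followed by a fresh copy of the length-$7$ gadget, because these windows overlap almost completely. Your step~(ii) silently assumes independent local pictures at adjacent positions, which is impossible. Finally, ``squarefreeness checked by a finite computation'' is fine for the single explicit word of Proposition~\ref{Proposition 4 Non-entendable}, but here $W$ depends on the parameters $n$ and $t$, so you need a structural argument, not a one-off verification.

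The paper's proof avoids all of this by a different mechanism. It takes the prefix to be the simple word $A=\mathtt{12\cdots n}$, which has $n-1\geqslant t$ internal positions, and observes that $A$ has exactly $N=(n-1)(n-2)$ distinct internal square-free extensions. It then uses the Zimin word $Z_N$ over $N$ variables (not $t+1$), substituting for the $i$-th variable the $i$-th extension of $A$ under a homomorphism $\varphi$, and sets $W=A\,\varphi(Z_N)$. Now \emph{any} single-letter insertion into the prefix $A$ turns it into one of the blocks $\varphi(\mathtt{i})$, and the Zimin structure guarantees that this block already occurs as a factor to the right, producing a square. There is no local letter-restriction step at all; every possible extension is handled uniformly by the catalogue of blocks. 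Squarefreeness of $\varphi(Z_N)$ is then proved by a structural argument exploiting that each block $\varphi(\mathtt{i})$ contains a unique ``marker'' factor and that every factor of a Zimin word has a unique letter of maximal value.
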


\begin{proof}
	Let $A=\mathtt{12\ldots n}$ be a word over alphabet $\mathbb{A}_n$. This word has exactly  $N=(n-~1)(n-2)$ distinct internal square-free extensions. Let us consider the Zimin word $Z_{N}$ over the~alphabet~$\mathbb{A}_{N}$ and the homomorphism
	$\varphi:\mathbb{A}_{N}^*\to\mathbb{A}_n^*,$
	such that the image of every letter of the~alphabet~$\mathbb{A}_{N}$ is a unique internal extension of the word $A$ assigned in a natural way as follows:
	\begin{align*}
		\varphi(\mathtt{1})=&\,\mathtt{1\mathbf{3}23\ldots n}\\
		\varphi(\mathtt{2})=&\,\mathtt{1\mathbf{4}23\ldots n}\\
		\vdots&\\
		\varphi(\mathtt{n-2})=&\,\mathtt{1\mathbf{n}23\ldots n}\\
		\varphi(\mathtt{n-1})=&\,\mathtt{12\mathbf{1}3\ldots n}\\
		\varphi(\mathtt{n})=&\,\mathtt{12\mathbf{4}3\ldots n}\\
		\vdots&\\
		\varphi(\mathtt{N})=&\,\mathtt{123\ldots (n-1)\mathbf{(n-2)}n}.\\
	\end{align*}
For every $\mathtt{i}\in \mathbb{A}_N$ the word $\varphi(\mathtt{i})$ has a unique factor $\mathtt{je(j+1)}$, where $\mathtt{e}$ is the inserted letter that extended the word $A$. Moreover, if $\mathtt{j}\neq\mathtt{1}$, then $\varphi(\mathtt{i})$ has also a unique factor $\mathtt{(j-1)je}$ and if $\mathtt{j+1}\neq\mathtt{n}$, then $\varphi(\mathtt{i})$ has a unique factor $\mathtt{e(j+1)(j+2)}$.
	
		Let us assume that the word $\varphi(Z_{N})$ contains a square $XX$. It is not hard to verify that for any $\mathtt{x,y}\in\mathbb{A}_{N}\backslash\{\mathtt{1}\}$, $\mathtt{x}\neq \mathtt{y}$, the words $\varphi(\mathtt{1x})$, $\varphi(\mathtt{x1})$, $\varphi(\mathtt{1x1})$, $\varphi(\mathtt{x1y})$, $\varphi(\mathtt{1x1y})$ and $\varphi(\mathtt{x1y1})$ are square-free. It follows that the length of $XX$ has to be greater than $3n+2$ and $XX$ has to contain a block $\varphi(\mathtt{u})$ for some $\mathtt{u}\neq\mathtt{1}$. Moreover, this block is unique in $XX$ since every factor of a Zimin word contains a unique single letter of the greatest value (in that factor). In consequence the square $XX$ contains a unique factor $\mathtt{je(j+1)}$, which must occupy the~middle of the word $XX$. This fact gives us two possible cases for the form of the~word~$X$, namely
		$$X=\mathtt{(j+1)}Y\mathtt{je}\ \mbox{ or }\ X=\mathtt{e(j+1)}Y\mathtt{j},$$
		for some nonempty word $Y$. We may also assume that $\mathtt{j}=1$ or $\mathtt{j+1}=\mathtt{n}$, since otherwise one of the parts of the square $XX$ would have to contain another unique factor, which is clearly impossible.
		
		Let us consider the case $\mathtt{j}=\mathtt{1}$. Then we have $$\varphi(\mathtt{u})=\mathtt{1e23\ldots n}.$$ To avoid a second unique factor, the word $X$ has to be of form
		$$X=\mathtt{e23\ldots n}B\mathtt{1}$$
		for some nonempty word $B$, and so
		$$XX=\mathtt{e23\ldots n}B\varphi(\mathtt{u})B\mathtt{1}.$$
		Let us notice that $XX$ is a factor of the word $$\varphi(\mathtt{u})B\varphi(\mathtt{u})B\mathtt{1},$$
		since the only word of form $\varphi(\mathtt{y})$ with the suffix $=\mathtt{e23\ldots (n-1)n}$ is $\varphi(\mathtt{u})$. Between any occurences of the same letter in Zimin word there is a letter of greater value, so the word $Z$ contains a factor $\varphi(\mathtt{z})$ for some $\mathtt{z}$ greater than $\mathtt{x}$, and this fact creates a contradiction.
		
		The reasoning in the case $\mathtt{j+1}=\mathtt{n}$ goes analogously. Thus we have proved that the word $\varphi(Z_{N})$ is square-free.
		
		 In a similar way one may prove that the word $A\varphi(Z_{N})$ is also square-free. By the structure of Zimin words, it follows that inserting a single letter on one of the internal positions of the~prefix~$A$ in the word $A\varphi(Z_{N})$ generates a square.
\end{proof}

	The above results lead naturally to the following question.
	
	\begin{question}\label{Problem t Last Positions}
	Is it true that there is some constant $t\geqslant3$ such that every quaternary square-free word can be extended at some of its rightmost $t$ positions?
	\end{question}

We can only prove that the answer is negative over $5$-letter alphabet for sufficiently large~$t$ if we omit the~very last position in the process of extension.

\begin{proposition}\label{Proposition Last t Inner Positions} For every $t\geqslant87$ there exists a square-free word over a $5$-letter alphabet which is non-extendable at any of its $t$ inner rightmost positions. 
\end{proposition}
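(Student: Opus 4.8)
The plan is to marry the Mol--Rampersad theorem with the Zimin-word device from the proof of Theorem~\ref{Theorem Zimin}. Since $t\geqslant 87$, fix an extremal ternary word $H=h_1h_2\cdots h_{t+1}$ over $\{\mathtt 1,\mathtt 2,\mathtt 3\}$ of length $t+1$; it has exactly $t$ internal positions, and by extremality every insertion of $\mathtt 1$, $\mathtt 2$ or $\mathtt 3$ at any of them already creates a square inside $H$. The job of the two extra letters is to block insertions of $\mathtt 4$ and $\mathtt 5$ there as well. For each internal position $p$ of $H$ and each $x\in\{\mathtt 4,\mathtt 5\}$ let $H_{p,x}$ be obtained from $H$ by inserting $x$ between $h_p$ and $h_{p+1}$; since $\mathtt 4,\mathtt 5$ do not occur in $H$, the $2t$ words $H_{p,x}$ are square-free, pairwise distinct, and all of length $t+2$. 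Put $K=2t$, let $Z_K=Z_{K-1}x_KZ_{K-1}$ be the Zimin word over $\mathbb{A}_K$, and let $\psi\colon\mathbb{A}_K^\star\to\mathbb{A}_5^\star$ be the homomorphism extending some bijection of $\mathbb{A}_K$ onto the set $\{H_{p,x}\}$. I would set $W_0=H\,\psi(Z_K)$ and take the required word to be the reversal $W=\widetilde{W_0}$: by the left--right symmetry remarked after Theorem~\ref{Theorem Zimin}, this turns the $t$ leftmost internal positions of $W_0$ into the $t$ rightmost internal positions of $W$.

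Granting that $W_0$ is square-free, the non-extendability part is easy. Inserting $\mathtt 1$, $\mathtt 2$ or $\mathtt 3$ at an internal position of the prefix $H$ of $W_0$ creates a square inside $H$. Inserting $x\in\{\mathtt 4,\mathtt 5\}$ at the internal position $p$: if the result is not square-free we are done; otherwise the prefix $H$ has turned into $H_{p,x}=\psi(x_j)$ for the corresponding $x_j\in\mathbb{A}_K$, so $W_0$ has turned into $\psi(x_j)\psi(Z_K)=\psi(x_jZ_K)$, and since $Z_K$ begins with $Z_j=Z_{j-1}x_jZ_{j-1}$ the word $x_jZ_K$ begins with the square $(x_jZ_{j-1})^2$, whose $\psi$-image is a nonempty square. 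Hence all five letters are forbidden at each of the $t$ leftmost internal positions of $W_0$, that is, at each of the $t$ rightmost internal positions of $W$.

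The real work --- and where I expect the main obstacle to lie --- is proving $W_0=H\,\psi(Z_K)$ square-free. This is the analogue of the long first part of the proof of Theorem~\ref{Theorem Zimin}, but it is more delicate because the blocks $H_{p,x}$ are far less rigid than the ones used there: the one-letter ``marker'' $h_p\,x\,h_{p+1}$ of a block need not determine the block, since an extremal ternary word typically repeats short factors. I would split into two cases. If a putative square $XX$ in $W_0$ contains no letter of $\{\mathtt 4,\mathtt 5\}$, then it lies inside a maximal $\{\mathtt 4,\mathtt 5\}$-free factor of $W_0$; because in a Zimin word the only pairs of adjacent letters involve $x_1$, each such factor is a factor of $h_1\cdots h_{t+1}h_1\cdots h_{p_1}$ or of $h_{p_1+1}\cdots h_{t+1}h_1\cdots h_t$, where $p_1$ is the insertion position of the block $\psi(x_1)$; so it suffices to choose $p_1$ --- and, if necessary, the word $H$ among the extremal ternary words of length $t+1$ --- so that these two words are square-free, and this is exactly where possible internal borders of $H$ must be dealt with. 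If $XX$ does contain a letter of $\{\mathtt 4,\mathtt 5\}$, then, as in Theorem~\ref{Theorem Zimin}, $XX$ is long enough to contain a whole block $\psi(\mathtt u)$ for the unique letter $\mathtt u$ of largest value in the relevant factor of $Z_K$; periodicity then forces a second occurrence of the word $\psi(\mathtt u)$ inside $XX$, which, not being another genuine block, could only straddle a block boundary, and ruling this out --- once more via a suitable choice of $H$ and $p_1$ --- together with the Zimin property that between any two occurrences of a letter there is a letter of larger value, yields the contradiction. Showing that a workable pair $(H,p_1)$ always exists, whether by counting over the $t$ candidate positions or by direct inspection of the Mol--Rampersad family, is the step requiring the most care.
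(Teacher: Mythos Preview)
Your overall strategy---take an extremal ternary word to block the three old letters, then feed its $\{\mathtt 4,\mathtt 5\}$-extensions into a Zimin substitution to block the two new letters---is exactly the paper's. What you are missing is a single framing device that dissolves precisely the difficulty you flag as ``the step requiring the most care.''

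Instead of working with the bare extremal word $H$, the paper takes an extremal ternary word $A$ of length $t$ and sets $S=\mathtt 4A\mathtt 5$; the target word is $P=WS$ with $S$ on the \emph{right}, so the $t$ rightmost inner positions of $P$ sit inside $S$ and no reversal is needed. The blocks substituted into the Zimin word are the extensions of $S$ (not of $A$) by a letter from $\{\mathtt 4,\mathtt 5\}$ at one of those $t$ positions---$2t-1$ blocks in all, since inserting $\mathtt 5$ just before the final $\mathtt 5$ already yields a square. The point of the frame is that every block now begins with $\mathtt 4$ and ends with $\mathtt 5$, and one checks that the factor $\mathtt{54}$ occurs in $P$ \emph{only} at block boundaries. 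This makes square-freeness genuinely routine: the maximal $\{\mathtt 1,\mathtt 2,\mathtt 3\}$-factors of $P$ are all factors of $A$, hence square-free, while any square containing a $\mathtt 4$ or $\mathtt 5$ is synchronised with the block structure by the $\mathtt{54}$ markers, and the Zimin property finishes the job.

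By contrast, your unframed blocks $H_{p,x}$ all begin and end in $\{\mathtt 1,\mathtt 2,\mathtt 3\}$ and share long ternary prefixes and suffixes with one another; adjacent blocks $\psi(x_1)\psi(x_j)$ therefore contain long factors of $HH$, whose square-freeness depends delicately on $H$. Your plan to search for a workable pair $(H,p_1)$ is not clearly feasible: you would need, simultaneously for all $2t-1$ neighbours of $\psi(x_1)$ on either side, that the resulting boundary factors avoid squares, and you give no argument that such a choice exists for every $t$. The $\mathtt 4\cdots\mathtt 5$ frame removes this obstacle entirely; with it, no special choice of $H$ or of the bijection is required.
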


\begin{proof} Let $A=a_1a_2\cdots a_t$ be any extremal word of length $t$ over alphabet $\{\mathtt{1,2,3}\}$. Let $S=\mathtt{4}A\mathtt{5}$. Consider all possible extensions of the word $S$ by letters $\mathtt{4}$ or $\mathtt{5}$ at all inner positions of $A$. There are exactly $2(t-1)$ such words and we may enumerate them as $A_1,A_2,\dots,A_{2t-2}$. Let us denote also $A_{2t-1}=\mathtt{4}A\mathtt{45}$. Now, consider the word $W$ obtained as the effect of a substitution $x_i=A_i$ of words $A_i$ to the corresponding letters of the Zimin word $Z_{2t-1}$. Finally, let us denote $P=WS$.
	
	We claim that the word $P$ satisfies the assertion of the proposition. Indeed, consider any extension of $P$ at any of its $t$ final inner positions. If the inserted letter is from the alphabet $\{\mathtt{1,2,3}\}$, then we get a square by the extremality of $A$. Otherwise, if the inserted letter is from $\{\mathtt{4,5}\}$, then the suffix $S$ of $P$ becomes one of the words $A_i$ and we get a square by the~structure of the Zimin word $Z_{2t-1}$.
	
	It is also not hard to demonstrate that the word $P$ is indeed square-free, by a reasoning similar to the one in the previous proof.
	\end{proof}

\section{The number of square-free extensions}

\subsection{The square-free potential}
The problem of square-free extensions leads to some naturally defined functions on words. For instance, given a square-free word $W$ over alphabet $\mathbb{A}$, let $\AE (W)$ and $\ae (W)$ denote\footnote{Such designation of the function, borrowed from the norwegian alphabet, was chosen in order to honor Axel Thue.}, respectively, the number of different square-free extensions and the number of different \emph{internal} square-free extensions of $W$. Such functions will be called a \emph{square-free potential} and an~\emph{internal square-free potential} of the word $W$. Given that, we can rephrase some definitions in the terms of square-free potentials: the square-free word $W$ is \emph{extremal} if $\AE(W)=0$, \emph{almost extremal} if $\ae(W)=0$, and \emph{maximal} if $\AE(W)=\ae(W)$.

Let us notice that for every square-free word $W$ over the alphabet $\mathbb{A}_n$, the inequality $$\AE(W)\leqslant\ae(W)+2(n-1)$$ holds.

As we already know, $\AE(W)=0$ for infinitely many square-free ternary words. But how large this function can be for words of length $n$?

Let $\mathcal{S}_n$ denote the set of all finite square-free words over the alphabet $\mathbb{A}_n$. Let $\AE_n(k)$ and $\ae_n(k)$ be the maximum values of $\AE(W)$ and $\ae(W)$ for words of length $k$ in $\mathcal{S}_n$. Clearly, $\AE_3(k)\leqslant k+3$ and $\ae_3(k)\leqslant k-1$ for all $k\geqslant 1$, by definition (every ternary square-free word can be potentially extended at every internal position by just one letter, and at the border positions by 2 distinct letters each). However, notice that the number of internal positions where such word may be extended is limited by the number of \emph{palindromes} it contains. Indeed, any palindrome inside a square-free word must contain a factor $\mathtt{xyx}$ in the middle, which simply cannot be extended at the left nor at the right end. Such palindromes occur very often in square-free ternary words, at least once in every factor of length $7$, which gives the following bound.

\begin{proposition}\label{Proposition AE_3(k)}
For every sufficiently large $k$, we have $\AE_3(k)\leqslant \frac{5}{7}k$.
\end{proposition}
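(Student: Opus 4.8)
The plan is to bound the square-free potential $\AE(W)$ of a ternary word $W$ of length $k$ by separately controlling the contributions from internal positions and from the two border positions. The border positions contribute at most $4$, so the real task is to show $\ae(W) \leqslant \tfrac{5}{7}k - O(1)$ for every square-free ternary word $W$. An internal position $i$ (between the $i$-th and $(i{+}1)$-st letters) can be extended by a letter $x$ only if $x$ differs from both neighbours, which over a $3$-letter alphabet forces $x$ to be the unique letter distinct from $W_i$ and $W_{i+1}$ (in particular the two neighbours must already be distinct, which is automatic in a square-free word). So each internal position admits \emph{at most one} candidate letter, and $\ae(W)$ counts the internal positions where that unique candidate actually yields a square-free word.

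The key observation, already flagged in the text, is that a position sitting in the middle of a palindromic factor $\mathtt{xyx}$ can never be extended: inserting the forced letter there completes a square $\mathtt{yxxy}$ (up to the obvious symmetry). More precisely, I would argue: if $W = W' W''$ with $|W'| = i$, and the position $i$ is extendable, then writing $W_i = a$, $W_{i+1} = b$ and the forced letter $c$ (the third letter), the word $W' c W''$ must be square-free; in particular $W_{i-1} W_i c = W_{i-1} a c$ is not of the form $uu$ — fine since it has odd length — but more usefully $W_{i-1}\,W_i = W_{i-1}\,a$ cannot equal $a\,c$ reading across, i.e. $W_{i-1} \neq c$, and symmetrically $W_{i+2} \neq c$. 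Combining $W_{i-1}\neq c$ with $W_{i-1}\neq W_i = a$ (square-freeness) forces $W_{i-1} = b = W_{i+1}$; likewise $W_{i+2} = a = W_i$. Hence \textbf{every} extendable internal position $i$ satisfies $W_{i-1} = W_{i+1}$ and $W_i = W_{i+2}$, i.e. it lies strictly inside a factor of the form $\mathtt{baba}$, equivalently it is flanked by a $\mathtt{bab}$ on the left and an $\mathtt{aba}$ on the right. I would then package this as: the set of extendable positions injects into the set of occurrences of factors $\mathtt{xyxy}$ (with $x\ne y$) — note $\mathtt{xyxy}$ itself \emph{is} a square, so in fact one has to be a touch more careful; the correct statement is that positions $i-1$ through $i+2$ carry the pattern $b a b a$ only in the ``one-sided'' sense above, and I must check both orientations $X=\mathtt{(j+1)}Y\mathtt{je}$-style possibilities don't overlap too much. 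This is the step I expect to be the main obstacle: getting a clean, correct combinatorial dictionary between extendable positions and palindromic occurrences without accidentally counting a square.

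Granting that dictionary, the counting goes as follows. Thue's theorem and the standard fact (see \cite{Thue, BerstelThue}) that every square-free ternary word of length $\geqslant 7$ contains a palindromic factor of length $3$ — indeed in any window of $7$ consecutive letters there is an occurrence of some $\mathtt{xyx}$ — gives a lower bound on the number of \emph{non}-extendable positions: in each block of $7$ consecutive internal positions at least $2$ are non-extendable (the centre of a length-$3$ palindrome blocks its own position, and a short case analysis shows one cannot pack the $\mathtt{baba}$-windows so densely that only one position per $7$ is lost). I would make this precise by covering the $k-1$ internal positions with $\lfloor (k-1)/7\rfloor$ disjoint windows of length $7$, using that each window kills at least $2$ of its positions for extension, to get at most $\tfrac{5}{7}(k-1)$ extendable internal positions, hence $\ae(W) \leqslant \tfrac{5}{7}k - \tfrac{5}{7}$ and $\AE(W) \leqslant \tfrac{5}{7}k - \tfrac{5}{7} + 4$, which is $\leqslant \tfrac{5}{7}k$ for $k$ large enough. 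The phrase ``for every sufficiently large $k$'' in the statement is exactly what absorbs the additive slack between $\tfrac{5}{7}k - \tfrac{5}{7} + 4$ and $\tfrac{5}{7}k$.

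A couple of points I would double-check while writing the details. First, the claim ``every square-free ternary word contains $\mathtt{xyx}$ at least once in every factor of length $7$'': this should be verified directly by listing the square-free ternary words of length $7$ that avoid all of $\mathtt{121,131,212,232,313,323}$ and observing there are none — a finite check, and the bound $7$ is what makes the constant $\tfrac{5}{7}$ appear, so it must be the true threshold (length $6$ would not suffice, e.g. $\mathtt{123123}$ is not square-free but something like $\mathtt{213231}$ needs checking). Second, I should confirm the injectivity/counting is not spoiled by the two border positions or by palindromes of length $3$ sitting at the very ends of $W$; these contribute only $O(1)$ and are safely dumped into the ``sufficiently large $k$'' clause. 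The cleanest write-up is probably: (i) each internal position has a unique candidate letter; (ii) an extendable position forces a local pattern equivalent to ``centre of a $\mathtt{baba}$''; (iii) each length-$3$ palindrome occurrence rules out one position, and palindromes are $7$-dense; (iv) arithmetic.
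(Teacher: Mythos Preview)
Your overall strategy matches the paper's: palindromic factors $\mathtt{xyx}$ are $7$-dense in ternary square-free words, each one blocks two internal positions, and arithmetic yields $\AE_3(k)\leqslant \tfrac{5}{7}k + O(1)$. However, your step (ii) --- identifying \emph{which} positions a palindrome blocks --- is wrong, and the error propagates through the rest of the sketch.

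Concretely: you assert that a position ``in the middle'' of $\mathtt{xyx}$ is non-extendable, and separately that an extendable position $i$ forces $W_{i-1}\neq c$. Neither holds. Take $W=\mathtt{12321}$ with the palindrome $\mathtt{232}$ at positions $2,3,4$. The internal position between $W_2=\mathtt{2}$ and $W_3=\mathtt{3}$ has forced letter $\mathtt{1}$, and inserting it gives $\mathtt{121321}$, which is square-free; so that central position \emph{is} extendable. Your inequality $W_{i-1}\neq c$ does not follow from forbidding length-$4$ squares in $W_{i-1}\,a\,c\,b$, because $W_{i-1}a=cb$ already fails via $a\neq b$; there is no constraint on $W_{i-1}$. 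That is why your argument collapses into the impossible conclusion that extendable positions sit inside a $\mathtt{baba}$ square --- the implication you derived is vacuous, not a contradiction to be finessed.

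The correct (and simpler) claim, which is what the paper actually uses, is that the two positions \emph{flanking} the palindrome are non-extendable. If $W_jW_{j+1}W_{j+2}=\mathtt{xyx}$ then square-freeness forces $W_{j-1}=W_{j+3}=\mathtt{z}$ (the third letter). At position $j-1$ (between $W_{j-1}=\mathtt{z}$ and $W_j=\mathtt{x}$) the forced letter is $\mathtt{y}$, and inserting it produces the square $\mathtt{yxyx}$; symmetrically at position $j+2$ one gets $\mathtt{xyxy}$. Thus each occurrence of $\mathtt{xyx}$ kills the two positions just outside it, not inside it. With this in hand, your step (iii)--(iv) goes through: disjoint $7$-letter windows each contain an $\mathtt{xyx}$ contributing two blocked positions, so $\ae(W)\leqslant (k-1)-2\lfloor k/7\rfloor$, and the border $+4$ is absorbed for large $k$. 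You should also reconcile the inconsistency between ``at least $2$ are non-extendable'' and your later summary ``each length-$3$ palindrome occurrence rules out one position''; only the former gives $\tfrac{5}{7}$.
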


However, the numerical results presented in Table \ref{AE3ae3} suggest that a better upper bound may be found. 

Let us consider a square-free word $M$ of length 35 such that
$$M=\mathtt{1\_213\_123\_132\_312\_321\_231\_213\_123\_132\_312\_321\_2},$$
where the symbol $\_$ stands for an extendable position of the word $M$. What is worth of notice, the prefixes $M_k$ of lenghts $k\geqslant7$ of the word $M$ has the maximal possible value of the function $\ae$ for given $k$. Moreover, the internal positions on which the words $M_k$ are extendable coincide with internal extendable positions of the word $M$.

\begin{table}\centering
	\begin{tabular}{|c||cccccccccccccccc|}\hline
		$k$&3&4&5&6&7&8&9&10&11&12&13&14&15&16&17&18\\
		$\ae_3(k)$&2&3&4&3&2&3&3&3&4&4&4&5&5&5&6&6\\
		$\AE_3(k)$&6&7&6&6&6&6&6&6&6&7&7&7&8&8&8&9\\\hline
		$k$&19&20&21&22&23&24&25&26&27&28&29&30&31&32&33&34\\
		$\ae_3(k)$&6&7&7&7&8&8&8&9&9&9&10&10&10&11&11&11\\
		$\AE_3(k)$&9&9&10&10&10&11&11&11&12&12&12&13&13&13&14&14\\\hline
		$k$&35&36&37&38&39&40&41&42&43&44&45&46&47&48&49&50\\
		$\ae_3(k)$&12&11&11&11&12&12&12&12&12&13&13&13&14&14&14&15\\
		$\AE_3(k)$&14&15&14&15&16&15&14&15&15&15&16&16&16&17&17&17\\\hline
	\end{tabular}
	\vskip0.5cm
	\caption{Values of the functions $\ae_3(k)$ and $\AE_3(k)$ for $k\leqslant50$.}\label{AE3ae3}
\end{table}

The construction of the nonchalant algorithm suggests that answering the question concerning the values of function $\ae$ for the nonchalant words seems plausible. The numerical results shows that for the first 1000 iterations of the nonchalant algorithm, the values of $\ae$ are slightely increasing in such manner, that when the new maximal value is obtained for the~nonchalant word $N_i$, then the value of $\ae$ for preceding words is never less than $\ae(N_i)-2$ (see Tables \ref{aen} and \ref{aemax}). 

\begin{table}\centering
	\begin{tabular}{|c||ccccccccccccccccccc|}\hline
		$i$&2&3&4&5&6&7&8&9&10&11&12&13&14&15&16&17&18&19&20\\
		$\ae(N_i)$&1&2&2&2&2&2&3&3&3&3&3&2&2&1&1&1&2&2&2\\\hline
		$i$&21&22&23&24&25&26&27&28&29&30&31&32&33&34&35&36&37&38&39\\
		$\ae(N_i)$&3&3&3&3&3&4&4&4&4&4&4&5&4&4&4&4&5&5&5\\\hline
	\end{tabular}
	\vskip0.5cm
	\caption{Values of the function $\ae$ for nonchalant words $N_i$ for $i\leqslant39$.}\label{aen}
\end{table}

\begin{table}\centering
	\begin{tabular}{|c||ccccccccccccccc|}\hline
		$i$&2&3&8&26&32&40&46&64&79&100&108&111&117&135&172\\
		$\ae(N_i)$&1&2&3&4&5&6&7&8&9&10&11&12&13&14&15\\\hline
		$i$&175&183&189&222&243&251&254&260&279&286&314&338&346&352&370\\
		$\ae(N_i)$&16&17&18&19&20&21&22&23&24&25&26&27&28&29&30\\\hline
		$i$&385&406&414&417&423&445&469&477&489&496&524&548&556&562&580\\
		$\ae(N_i)$&31&32&33&34&35&36&37&38&39&40&41&42&43&44&45\\\hline
		$i$&595&616&624&627&633&655&687&706&712&737&740&743&764&779&800\\
		$\ae(N_i)$&46&47&48&49&50&51&52&53&54&55&56&57&58&59&60\\\hline
		$i$&808&811&817&835&850&872&875&878&881&902&917&938&967&973&997\\
		$\ae(N_i)$&61&62&63&64&65&66&67&68&69&70&71&72&73&74&75\\\hline
	\end{tabular}
	\vskip0.5cm
	\caption{Indexes $i<1000$ for which the nonchalant procedure gave new maximal values of $\ae(N_i)$.}\label{aemax}
\end{table}

This fact led us to the following version of Conjecture \ref{Conjecture Nonchalant}.

\begin{conjecture}
	If $\ae(N_1)>2$ for the starting word $N_1$ of the nonchalant algorithm, then the~respective sequence of nonchalant words is infinite.
\end{conjecture}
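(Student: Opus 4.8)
One natural line of attack is to reduce the statement to a lower bound on the internal square-free potential along the sequence. Since every internal extension is in particular an extension, we always have $\AE(W)\geqslant\ae(W)$, so the sequence started from $N_1$ is infinite as soon as $\ae(N_i)\geqslant1$ for all $i$. The plan is therefore to propagate the hypothesis $\ae(N_1)>2$ forward, by proving a \emph{quasi-monotonicity} lemma of the shape $\ae(N_{i+1})\geqslant\ae(N_i)-2$ together with the complementary claim that $\ae$ can only drop from a value that is already at least $3$, so that once $\ae$ has exceeded $2$ it is never within one step of reaching $0$, and within two steps only from a value $\geqslant3$. These are exactly the regularities exhibited by the data in Tables~\ref{aen} and~\ref{aemax}: each time a new maximum of $\ae$ is reached, all earlier values lie within $2$ of it, and the recorded dips of $\ae$ to $1$ are always preceded by a run of larger values.

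To prove quasi-monotonicity one would follow the set of extendable internal positions step by step. Over $\mathbb{A}_3$ an internal position lies between two distinct letters $a,b$ and admits at most one square-free insertion, namely the third letter; so $\ae(N_i)$ literally counts the internal positions at which this forced insertion leaves the word square-free. If the nonchalant step yields $N_{i+1}=N_i'xN_i''$, the word is altered only on the suffix beginning at the insertion point $p=|N_i''|$; consequently an extendable internal position $q$ of $N_i$ lying to the left of that point can fail to be extendable in $N_{i+1}$ only through a square that covers both $q$ and $p$, hence a square whose length grows with the distance from $q$ to $p$. The number of positions spoiled by one step is thus controlled by the length of the longest square the insertion at $p$ can create --- in the worst case by the backtrack length $|N_i''|$ itself --- while on the gain side one always acquires at least the new internal position created at the former right border. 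The estimate then comes down to bounding these quantities and checking that (positions gained) minus (positions spoiled) stays $\geqslant-2$.

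The decisive obstacle is that there is no a priori bound on the length of a square in a growing square-free word, hence none on $|N_i''|$; this is precisely the difficulty that keeps Conjectures~\ref{Conjecture Nonchalant} and~\ref{Conjecture Nonchalant Infinite} open, and it obstructs any direct control of how far to the left a nonchalant insertion can reach. I would therefore attack the problem in the spirit of the (computer-assisted) proof of Theorem~\ref{Theorem Extremal 3 Words}: isolate a family $\mathcal{F}$ of square-free words cut out by a property of a bounded-length suffix (say $\ae>2$ together with a finite list of forbidden suffixes that would admit a long spoiling square near the right end), prove that the nonchalant step never leaves $\mathcal{F}$, and settle the closure check by a finite computation over the finitely many suffix classes. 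The missing ingredient is a proof that the effect of a single nonchalant step --- both the backtrack length and the change in the extendable positions --- is determined by a bounded suffix of $N_i$; here the threshold $\ae(N_1)>2$ ought to be used in an essential way, since the values $\ae\in\{1,2\}$ appear to leave too little slack for such an invariant to stay closed, and that is presumably why the conjecture is stated with this hypothesis rather than merely $\ae(N_1)\geqslant1$.
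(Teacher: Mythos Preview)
The statement you are trying to prove is presented in the paper as a \emph{conjecture}, not a theorem; the paper offers no proof whatsoever, only the numerical evidence in Tables~\ref{aen} and~\ref{aemax} together with the remark that preceding values of $\ae$ never fall more than $2$ below a newly attained maximum. There is therefore no argument in the paper to compare your attempt against.

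Your write-up is in any case not a proof but a sketch of a strategy, and you correctly identify the decisive obstruction yourself: there is no a priori bound on the backtrack length $|N_i''|$, hence no control on how many extendable positions a single nonchalant step can destroy, and the inequality $\ae(N_{i+1})\geqslant\ae(N_i)-2$ remains entirely unestablished. Worse, your ``complementary claim'' that $\ae$ can only drop from a value already at least~$3$ is contradicted by the very data you invoke: Table~\ref{aen} records $\ae(N_{14})=2$ and $\ae(N_{15})=1$, a drop from the value~$2$. The step-by-step invariant you propose therefore fails even empirically, and the closing suggestion --- to reduce the problem to a finite closure check over bounded suffix classes --- presupposes precisely the locality lemma (that a nonchalant step is determined by a bounded suffix) whose absence keeps Conjectures~\ref{Conjecture Nonchalant} and~\ref{Conjecture Nonchalant Infinite} open.
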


\subsection{The square-free potential of Zimin words}

Let us consider the square-free potential of Zimin words $Z_n$. Since Zimin words are non-extendable on the external positions, we have $\AE(Z_n)=\ae(Z_n)$. It is easy to verify, that $\AE(Z_1)=\AE(Z_2)=0$ and $\AE(Z_3)=2$. Let $n\geqslant4$. From the construction $Z_{n}=Z_{n-1}\mathtt{n}Z_{n-1}$ we get that the only extentions of $Z_n$ by the letters $\mathtt{1,2,\ldots,n-1}$ are those induced by the~prefix or sufix $Z_{n-1}$ (the \emph{new} letter $\mathtt{n}$ in the center of the word $Z_n$ provides that there is no square that contains this letter). Thus
$$\AE(Z_{n})=2\cdot\AE(Z_{n-1})+t(n),$$
where $t(n)$ stands for the number of different square-free extensions of the word $Z_n$ by inserting the letter $\mathtt{n}$. Let us recall that the extention of $Z_n$ by the letter $\mathtt{n}$ would generate a~square if and only if we insert this letter right after the last appeareance of any other letter in the prefix $Z_{n-1}\mathtt{n}$ or, analogously, right before the first appeareance of any other letter in the sufix $\mathtt{n}Z_{n-1}$. Since there are $2^n-2$ internal positions in the word $Z_n$, we have $$t(n)=(2^n-2)-2(n-1).$$ This leads us to the following

\begin{proposition}
	Let $Z_n$ be a Zimin word over an alphabet $\mathbb{A}_n$. Then	$\AE(Z_1)=\AE(Z_2)=0$, $\AE(Z_3)=2$ and
	\begin{equation}
	\AE(Z_n)=2^n-2n+2\cdot\AE(Z_{n-1}),
	\end{equation}
 for $n\geqslant4$.
\end{proposition}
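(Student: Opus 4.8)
The plan is to establish the recursion $\AE(Z_n) = 2\cdot\AE(Z_{n-1}) + t(n)$ where $t(n)$ is the number of square-free extensions obtained by inserting the new top letter $\mathtt{n}$, and then to compute $t(n) = 2^n - 2n$ exactly. The base cases $\AE(Z_1)=\AE(Z_2)=0$ and $\AE(Z_3)=2$ are a direct finite check: $Z_1=\mathtt 1$, $Z_2=\mathtt{121}$, $Z_3=\mathtt{1213121}$, and one inspects each internal insertion position. (External positions never help since $Z_n$ begins and ends with $\mathtt 1$, so $\AE(Z_n)=\ae(Z_n)$ for all $n$.)

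For the recursive step, fix $n\geqslant 4$ and write $Z_n = Z_{n-1}\,\mathtt{n}\,Z_{n-1}$. First I would argue that any square-free extension of $Z_n$ by a letter $x\in\{\mathtt 1,\dots,\mathtt{n-1}\}$ must actually occur strictly inside one of the two copies of $Z_{n-1}$, and conversely any square-free internal extension of a copy of $Z_{n-1}$ yields a square-free extension of $Z_n$. The key point is that the unique occurrence of the letter $\mathtt n$ sits in the centre, so no square of $Z_n$-with-$x$-inserted can straddle it (a square $UXXV$ containing that single $\mathtt n$ would need two $\mathtt n$'s); this confines the analysis to each half, where being square-free is exactly the condition defining extensions of $Z_{n-1}$. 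Inserting $x=\mathtt n$ anywhere cannot be caught by the ``$Z_{n-1}$ side'' count, so these $t(n)$ extensions are counted separately; hence $\AE(Z_n) = 2\,\AE(Z_{n-1}) + t(n)$.

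It remains to count $t(n)$, the internal positions at which inserting $\mathtt n$ keeps the word square-free. There are $2^n-2$ internal positions in $Z_n$ (its length is $2^n-1$). I would show that inserting $\mathtt n$ creates a square precisely when it is placed immediately after the \emph{last} occurrence in the prefix $Z_{n-1}\mathtt n$ of some letter $y\neq\mathtt n$, or symmetrically immediately before the \emph{first} occurrence in the suffix $\mathtt n Z_{n-1}$ of some such $y$: in the former case the inserted $\mathtt n$ together with the existing central $\mathtt n$ bounds a block whose two halves coincide (both equal to whatever lies between, since $y$ was the last such letter the factor repeats verbatim), producing a square $YY$; conversely, any square created must be of this form because it has to involve the central $\mathtt n$ and the new $\mathtt n$ as its two copies of the ``highest'' letter. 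This gives $2(n-1)$ forbidden positions — one per letter $y\in\{\mathtt 1,\dots,\mathtt{n-1}\}$ on each side, and these $2(n-1)$ positions are distinct — so $t(n) = (2^n-2) - 2(n-1) = 2^n - 2n$, and substituting yields $\AE(Z_n) = 2^n - 2n + 2\,\AE(Z_{n-1})$.

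The main obstacle is the verification in the previous paragraph that the forbidden positions are \emph{exactly} those $2(n-1)$ positions, and in particular that no \emph{other} insertion of $\mathtt n$ (far from the centre, not adjacent to a last/first occurrence) produces a square, and that a square cannot be created without involving the central $\mathtt n$ at all. For this one uses that $Z_{n-1}$ is itself square-free (so inserting $\mathtt n$ into a copy of $Z_{n-1}$ can only create a square that uses the inserted $\mathtt n$), together with the structural fact that in a Zimin word every factor has a unique letter of maximal value; these two facts pin down the only possible shape of a newly created square. The distinctness of the $2(n-1)$ forbidden positions and the fact that each genuinely forbids the insertion are then short case checks. I would also remark that combining the recursion with the base case $\AE(Z_3)=2$ immediately gives a closed form if desired, but the recursive statement is what the proposition asserts.
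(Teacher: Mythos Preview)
Your proposal is correct and follows essentially the same approach as the paper: you set up the recursion $\AE(Z_n)=2\,\AE(Z_{n-1})+t(n)$ via the unique central $\mathtt{n}$, and you compute $t(n)=(2^n-2)-2(n-1)$ by identifying the forbidden positions for inserting $\mathtt{n}$ as exactly those immediately after the last occurrence of each smaller letter in the left half (and symmetrically on the right). If anything, you are more explicit than the paper about why these $2(n-1)$ positions are the \emph{only} forbidden ones, invoking the square-freeness of $Z_{n-1}$ and the unique-maximum-letter property of Zimin factors.
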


Before we came up with the presented formula, our computer calculations gave us the~following sequence of the square-free potentials of Zimin words:
$$0,0,2,12,46,144,402.$$
Excluding the first two 0's, there is only one sequence in the OEIS which is initialized by such integers (see \cite{OEIS}). Description of the sequence suggests, that it has nothing to do with combinatorics on words. Needless to say, the next term of the OEIS sequence is equal to 1040 and, as it turned out, the next term of our sequence is equal to 1044. Thus, for the~brief moment, the authors became victims of the well known \emph{Strong Law of Small Numbers}.

\section{Final discussion}

Let us conclude the paper with a few general remarks and open problems. First notice that one may consider extremal words and nonchalant words with respect to any \emph{avoidable pattern}. For instance, one natural generalization of squares is that of \emph{$k$-powers}, which are words of the form $XX\cdots X$ consisting of $k$ copies of any nonempty word $X$. It was already proved by Thue \cite{Thue2} (see \cite{BerstelThue}, \cite{Lothaire}) that there exist infinitely many \emph{cube-free} words over a $2$-letter alphabet. Is the sequence of cube-free nonchalant \emph{binary} words infinite? Is it true that every cube-free \emph{ternary} word is extendable?

Similar questions can be asked for \emph{overlap-free} words (avoiding factors of the form $xWxWx$, where $x$ is a single letter and $W$ is a word). By the famous result of Thue \cite{Thue2} there exist binary overlap-free words of any length. Mol, Rampersad, and Shallit \cite{MolRampersadShallit} proved recently that there are infinitely many extremal overlap-free binary words. Actually they determined precisely the possible lengths of such words.

To state our main conjectures for general patterns let us recall briefly some basic notions of pattern avoidance (see \cite{BeanEM,Lothaire Algebraic}). Let $\mathbb V$ be an alphabet of variables. A \emph{pattern} $P=p_1p_2\dots p_r$, with $p_i\in \mathbb V$, is any nonempty word over $\mathbb V$. A~word $W$ \emph{realizes} a pattern $P$ if it can be split into nonempty factors $W=W_1W_2\dots W_r$ so that $W_i=W_j$ if and only if $p_i=p_j$, for all $i,j=1,2,\dots,r$. A~word~$W$ \emph{avoids} a pattern $P$ if no factor of $W$ realizes $P$. For instance, a square-free word avoids a pattern $P=xx$. A pattern $P$ is \emph{avoidable} if there exist arbitrarily long words avoiding $P$ over some finite alphabet. The least size of such alphabet is denoted as $\mu(P)$ and called the \emph{avoidability index} of $P$. A complete characterizations of avoidable patterns was provided independently by Zimin \cite{Zimin} and Bean, Ehrenfeucht and McNulty \cite{BeanEM}.

Now, given a fixed pattern $P$, we may define \emph{extremal $P$-free} words and \emph{$P$-nonchalant words} analogously as in the case of squares. The following conjectures seem worth experimentation.

\begin{conjecture}\label{Conjecture Extremal Patterns}
	For every avoidable pattern $P$, there are no extremal $P$-free words over alphabet of size $\mu(P)+1$.
\end{conjecture}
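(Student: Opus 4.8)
The plan is to attack the motivating case first --- $P=xx$, which is exactly Conjecture~\ref{Conjecture 4 Extremal} --- and then isolate which ingredients survive for a general avoidable pattern. The first step is a local reformulation of non-extendability. Fix an avoidable pattern $P=p_1\cdots p_r$, put $n=\mu(P)$, and let $W=w_1\cdots w_k$ be a $P$-free word over $\mathbb{A}_{n+1}$. An insertion at a cut of $W$ can create a realization of $P$ only through a factor straddling that cut; for a border insertion this realizing factor is forced to be a suffix (or prefix) of the new word whose block decomposition $W_1\cdots W_r$ is pinned down to the tail (or head) of $W$. I would first treat the patterns whose extreme variables $p_1$ and $p_r$ are isolated (occur only once): there the border obstruction is weak, and appending the $n+1$ candidate letters in turn while invoking avoidability of the truncated pattern $p_1\cdots p_{r-1}$ over $n$ letters should settle the conjecture. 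The resistant patterns are those, like $xx$, whose extreme variables recur, so that a border extension can be blocked by a ``periodic tail'' condition on $W$.

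The core of the argument must use that the alphabet has size \emph{exactly} $\mu(P)+1$ --- one letter more than is needed to avoid $P$ at all, but no surplus beyond that. For $P=xx$ the obstruction to appending a letter $c$ is that $W$ ends in $ZcZ$ for some (possibly empty) word $Z$; if all four candidate letters are blocked one gets four nested near-periods of the tail of $W$, and a Fine--Wilf-type analysis combined with the four-letter bound should force a square inside $W$ itself. Border insertions alone, however, cannot suffice: Proposition~\ref{Proposition 4 Non-entendable} exhibits a quaternary square-free word that is non-extendable at both of its two rightmost positions. Hence one must simultaneously track a bounded window of internal cuts, and here the strict alphabet bound is indispensable, since Proposition~\ref{Proposition Last t Inner Positions} shows that over a five-letter alphabet no fixed window of rightmost positions works. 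I expect \emph{this} to be the genuine obstacle: controlling internal insertions seems to require a structural description of the set of all $P$-free words of a given length as a finite object --- an automaton, or the image of finitely many morphisms --- which is presently unavailable for most patterns, and for squares over four letters is essentially the content of Question~\ref{Problem t Last Positions}, still open.

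As a fallback, and as the approach most likely to yield partial results uniform in $P$, I would imitate the recursive construction behind Theorem~\ref{Theorem Extremal 3 Words} and the Zimin-word constructions of Theorem~\ref{Theorem Zimin}: produce an infinite $P$-free word $\mathbf{u}$ over $\mathbb{A}_{n+1}$ by iterating a carefully chosen uniform morphism, together with a ``gluing lemma'' asserting that every finite $P$-free word over $\mathbb{A}_{n+1}$ embeds into a $P$-free word whose tail lies in the morphic image of $\mathbf{u}$, so that extendability inside $\mathbf{u}$ --- checked on a finite list of factors, by computer if need be --- transfers back to $W$. Since no single morphism generates every $P$-free word, the gluing lemma bears the whole weight of the argument, and making it work uniformly in the pattern $P$, rather than re-proving it ad hoc for each avoidability class, is where I expect the real difficulty to lie.
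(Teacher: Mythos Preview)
The statement you are attempting to prove is a \emph{conjecture} in the paper, not a theorem: the authors offer no proof, and indeed the motivating special case $P=xx$ (Conjecture~\ref{Conjecture 4 Extremal}) is explicitly open. So there is no ``paper's own proof'' to compare your proposal against.

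Your write-up is honest about this. It is not a proof but a research plan, and you flag the genuine obstacles yourself: the Fine--Wilf-type argument for border extensions cannot suffice because of Proposition~\ref{Proposition 4 Non-entendable}; controlling a bounded window of internal positions is exactly the open Question~\ref{Problem t Last Positions}; and the morphic ``gluing lemma'' you propose as a fallback is a hope, not a lemma --- you give no candidate morphism, no mechanism for embedding an arbitrary $P$-free word into a morphic tail, and no reason to believe such a lemma holds uniformly in $P$. Each of your three avenues ends at an acknowledged gap, so what you have submitted is a thoughtful survey of the difficulties rather than a proof. That is appropriate given the status of the statement, but it should not be labelled a proof proposal.
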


\begin{conjecture}\label{Conjecture Nonchalant Patterns}
	For every avoidable pattern $P$ and any integer $n\geqslant \mu(P)$, the sequence of $P$-nonchalant words over $\mathbb{A}_n$ is infinite and converges to a unique infinite word $\mathcal{N}^{(P)}_n$.
\end{conjecture}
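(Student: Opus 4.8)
The plan is to reduce the conjecture to a single quantitative statement: that the $P$-nonchalant algorithm over $\mathbb{A}_n$ (for $n\geqslant\mu(P)$) has \emph{bounded backtracking}, meaning there is a constant $B=B(P,n)$ such that at every step the inserted letter lands within the last $B$ positions of the current word. I would first check that this one statement already yields both halves of the conjecture. For infinitude: if each $N_i$ admits a $P$-free single-letter extension with the inserted letter in its last $B$ positions, then in particular $N_i$ is always extendable, so the greedy process never halts. For convergence: since $N_{i+1}$ agrees with $N_i$ on all but at most its last $B$ letters, and the words grow by exactly one letter per step, every fixed coordinate eventually stabilizes, so $N_i$ tends letterwise to an infinite word $\mathcal{N}^{(P)}_n$; and this limit is unique simply because the rule producing the sequence is deterministic. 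Thus the real content is the existence of $B$, which is in line with the computations of Table \ref{nai} and Appendix \ref{apA}.

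To prove the bounded-backtracking statement I would look for a self-reproducing structural invariant: a property $\mathcal{Q}$ of $P$-free words such that every $N_i$ has $\mathcal{Q}$, and any $P$-free word with $\mathcal{Q}$ admits a $P$-free single-letter extension within its last $B$ positions, the result again having $\mathcal{Q}$. Candidates for $\mathcal{Q}$ come from the structure theory of pattern-free words: uniform recurrence with an explicit gap bound, or membership in the image of a fixed morphism, or directly a bound on the length of the shortest non-extendable suffix. The difficulty — and the step I expect to be the main obstacle — is that the second clause is not genuinely local: a freshly inserted letter near the right end of $N_i'xN_i''$ can complete a realization of $P$ whose blocks are arbitrarily long, so whether a given short extension is legal is not a finite-window function of $N_i$. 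The invariant $\mathcal{Q}$ therefore has to encode enough long-range information to preclude such realizations, which is precisely the phenomenon that keeps the basic instances open — $P=xx$ with $n=3$ (Conjectures \ref{Conjecture Nonchalant} and \ref{Conjecture Nonchalant Infinite}) and the quaternary case.

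It is worth emphasizing that the case $n=\mu(P)$ is genuinely harder than it might look, and cannot be deduced from any statement asserting the absence of extremal words: for $P=xx$ and $n=3$, extremal words \emph{do} exist (Theorem \ref{Theorem Extremal 3 Words}), so here the conjecture is the dynamical assertion that the greedy trajectory systematically steers clear of all of them. For $n>\mu(P)$ the situation is more tractable in principle — the infinitude half would follow from a ``no extremal $P$-free words over $\mathbb{A}_n$'' statement (which for $n=\mu(P)+1$ is Conjecture \ref{Conjecture Extremal Patterns}), though convergence still requires the backtracking bound. Accordingly I would aim first at the range $n>\mu(P)$; even there one must be careful, since enlarging the alphabet changes which letter is ``earliest'' and hence alters the whole trajectory, so there is no obvious monotonicity in $n$ to exploit. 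In parallel, for a concrete small pattern such as cubes over two letters or overlaps, I would test numerically whether the putative limit word $\mathcal{N}^{(P)}_n$ is morphic by matching long prefixes against fixed points of candidate morphisms; a match would both support the conjecture and suggest a commuting relation between the greedy rule and a morphism on which a general argument could be founded.
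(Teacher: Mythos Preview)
The statement you are addressing is presented in the paper as a \emph{conjecture}, not a theorem; the paper offers no proof, only the remark that such conjectures ``seem worth experimentation'' together with the supporting numerical evidence of Table~\ref{nai} and Appendix~\ref{apA}. There is therefore no proof in the paper to compare your proposal against.

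Your proposal is, appropriately, a research strategy rather than a proof, and you are candid about this. The reduction you give in the first paragraph is correct and worth recording: a uniform backtracking bound $B=B(P,n)$ does imply both infinitude (each $N_i$ is extendable, so the process never halts) and letterwise convergence (position $k$ is frozen once $i>k+B$), and the deterministic rule makes the limit unique.

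The genuine gap is the one you yourself identify: you do not exhibit the invariant $\mathcal{Q}$, nor establish bounded backtracking for any single pair $(P,n)$. Your diagnosis of the obstruction is accurate --- whether a short extension of $N_i$ is $P$-free depends on arbitrarily long prefixes of $N_i$, so no purely local invariant can work --- and this is precisely why even the base case $P=xx$, $n=3$ (Conjectures~\ref{Conjecture Nonchalant} and~\ref{Conjecture Nonchalant Infinite}) remains open. Your observation that the case $n=\mu(P)$ cannot be reduced to a ``no extremal words'' statement, since extremal ternary square-free words exist by Theorem~\ref{Theorem Extremal 3 Words}, is also correct and important. In short, the proposal is a sensible outline of an attack, honest about its own incompleteness, but it is not a proof; and the paper does not claim one either.
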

A first attempt in studying extremal $P$-free words was made by Ter-Saakov and Zhang in \cite{Ter-SaakovZhang}, though they focused on a special family of \emph{unavoidable} patterns of the form $P=XY_1XY_2X\cdots XY_tX$, determining the exact number of extremal $P$-free words over any finite alphabet.

Actually, one may go into broader generality and consider similar problems for any \emph{monotone property} of words (sets of words closed under any alphabet permutation and taking factors). One natural example from outside the pattern avoidance setting, are words avoiding \emph{abelian squares} (words of the form $XY$, where $Y$ is any permutation of $X$). It is known that there exist infinitely many abelian square-free words over a $4$-letter alphabet, as conjectured by Erd\H{o}s \cite{Erdos} and proved by Ker\"{a}nen \cite{Keranen}. Ter-Saakov and Zhang found in \cite{Ter-SaakovZhang} the shortest extremal abelian square-free word over four letters: $$\mathtt{123421324321},$$ and conjectured that there are infinitely many of them. On the other hand, it is not hard to check that the number of nonchalant abelian square-free words over any finite alphabet is finite.

\appendix

\section{10 000 iterations of the nonchalant procedure}\label{apA}

We consider a 3-letter alphabet. Let $p$ be the number of positions that the nonchalant algorithm moved back in the $i$-th iteration. Tables \ref{TabNon1} and \ref{TabNon2} contain information about the~first 10~000 iterations of the nonchalant algorithm with initial word $\mathtt{1}$. For example, in the seventh iteration of the~algorithm, the 8-letter long word was obtained by inserting a single letter in the penultimate position of the previous, 7-letter long, word. The~three iterations with the~biggest number of positions moved back were bolded (in fact, these are the only iterations in which the algorithm moved back more than 9 positions (cf. Table \ref{nai}).

Table \ref{T2} contains an example of more detailed common results for various initial words. Let us focus on the distances between consecutive iterations, in which the nonchalant algorithm moved back by exactly four positions (the number 4 was chosen arbitrarily). We present the~number of occurrences of such distances with respect to various initial words (for each initial word we, again, analyse the first 10 000 iterations of nonchalant algorithm). For example, for initial word $\mathtt{1}$, the first occurrence of considered iteration takes place after 207 steps (Table \ref{TabNon1}). Such number of steps between two consecutive iterations does not happen anymore, so for the initial word $\mathtt{1}$ we have a number 1 in the column started by 207. As we can see, the most common distances among considered iterations are about 210-211 steps.

\begin{table}\centering
\begin{tabular}{|r|c||r|c||r|c||r|c||r|c||r|c||r|c||r|c|}\hline
$i$&$p$&$i$&$p$&$i$&$p$&$i$&$p$&$i$&$p$&$i$&$p$&$i$&$p$&$i$&$p$\\\hline
7&1&640&1&1307&2&1965&1&2592&4&3253&1&3861&1&{\bf4436}&{\bf20}\\
25&2&648&2&1338&1&1986&1&2625&1&3256&2&3883&2&4453&1\\
32&1&676&1&1349&7&1994&2&2657&1&3279&4&3890&2&4485&1\\
64&1&698&2&1382&1&2025&1&2662&1&3312&1&3921&1&4490&1\\
69&1&705&2&1387&1&2036&4&2665&2&3344&1&3953&1&4493&2\\
72&2&764&1&1390&2&2040&7&2696&1&3349&1&3958&1&4524&1\\
103&1&769&1&1421&1&2067&2&2728&1&3352&2&3961&2&4556&1\\
135&1&772&2&1453&1&2074&2&2730&2&3383&1&3992&1&4558&2\\
140&1&803&1&1458&1&2105&1&2732&2&3415&1&4013&1&4560&2\\
{\bf143}&{\bf15}&835&1&1461&2&2137&1&2761&2&3417&9&4021&2&4589&2\\
144&2&840&1&1484&4&2142&1&2792&1&3438&2&4052&1&4620&1\\
175&1&843&2&1517&1&2145&2&2824&1&3445&2&4063&4&4652&1\\
207&1&902&1&1549&1&2176&1&2829&1&3476&1&4093&2&4657&1\\
212&1&907&1&1554&1&2197&1&2832&2&3508&1&4100&2&4660&2\\
215&2&910&2&1557&2&2205&2&2863&1&3513&1&4131&1&4691&1\\
246&1&931&2&1588&1&2236&1&2884&1&3516&2&4163&1&4712&1\\
270&4&959&2&1620&1&2247&4&2892&2&3547&1&4168&1&4720&2\\
300&2&966&2&1622&2&2277&2&2923&1&3568&1&4171&2&4751&1\\
307&2&997&1&1624&2&2284&2&2934&4&3576&2&4202&1&4762&4\\
338&1&1029&1&1653&2&2315&1&2964&2&3607&1&4223&1&4792&2\\
370&1&1034&1&1684&1&2347&1&2971&2&3618&4&4231&2&4799&2\\
375&1&1037&2&1716&1&2352&1&3002&1&3648&2&4262&1&4830&1\\
378&2&1068&1&1721&1&2355&2&3034&1&3655&2&4273&4&4862&1\\
409&1&1089&1&1724&2&2386&1&3039&1&3686&1&4274&1&4867&1\\
430&1&1097&2&1755&1&2407&1&3042&2&3718&1&4275&1&4870&2\\
438&2&1128&1&1776&1&2415&2&3073&1&3723&1&4278&2&4901&1\\
469&1&1139&4&1784&2&2446&1&3094&1&3726&2&4296&2&4922&1\\
480&4&1169&2&1815&1&2457&7&3102&2&3757&1&4303&2&4930&2\\
510&2&1176&2&1826&4&2490&1&3133&1&3778&1&4342&1&4961&1\\
517&2&1207&1&1856&2&2495&1&3144&7&3786&2&4347&1&4972&4\\
548&1&1239&1&1863&2&2498&2&3177&1&3817&1&4350&2&4976&7\\\cline{15-16}
580&1&1244&1&1894&1&2529&1&3182&1&3828&4&4381&1\\
585&1&1247&2&1926&1&2561&1&3185&2&3829&1&4413&1\\
588&2&1278&1&1931&1&2566&1&3216&1&3830&1&4418&1\\
619&1&1299&1&1934&2&2569&2&3248&1&3833&2&4421&2\\\cline{1-14}
\end{tabular}
\vskip0.5cm
\caption{Non-zero positions moved back by nonchalant algorithm in iterations 1-5000.}\label{TabNon1}

\end{table}

\begin{table}\centering
\begin{tabular}{|r|c||r|c||r|c||r|c||r|c||r|c||r|c||r|c|}\hline
$i$&$p$&$i$&$p$&$i$&$p$&$i$&$p$&$i$&$p$&$i$&$p$&$i$&$p$&$i$&$p$\\\hline
5003&2&5632&1&6249&2&6852&2&7483&2&8144&1&8835&1&9485&2\\
5010&2&5664&1&6280&1&6883&1&7514&1&8147&2&8843&2&9516&1\\
5041&1&5666&2&6291&4&6904&1&7535&1&8178&1&8874&1&9527&4\\
5073&1&5668&2&6321&2&6912&2&7543&2&8205&2&8885&4&9557&2\\
5078&1&5697&2&6328&2&6943&1&7574&1&8238&1&8915&2&9564&2\\
5081&2&5728&1&6359&1&6954&4&7585&7&8261&3&8922&2&9595&1\\
5112&1&5760&1&6391&1&6984&2&7618&1&8267&2&8953&1&9627&1\\
5133&1&5765&1&6396&1&6991&2&7623&1&8298&1&8985&1&9632&1\\
5141&2&5768&2&6399&2&7022&1&7626&2&8330&1&8990&1&9635&2\\
5172&1&5799&1&6430&1&7054&1&7657&1&8335&1&8993&2&9666&1\\
5183&4&5820&1&6451&1&7059&1&7689&4&8338&2&9024&1&9687&1\\
5213&2&5828&2&6459&2&7062&2&7716&2&8369&1&9045&1&9695&2\\
5220&2&5859&1&6490&1&7093&1&7723&2&8390&1&9053&2&9726&1\\
5251&1&5870&4&6501&7&7114&1&7754&1&8398&2&9084&1&9737&4\\
5283&1&5900&2&6534&1&7122&2&7786&1&8429&1&9123&2&9738&1\\
5288&1&5907&2&6539&1&7153&1&7791&1&8440&4&9130&2&9739&1\\
5291&2&5938&1&6542&2&7164&4&7794&2&8470&2&9161&1&9742&2\\
5322&1&5970&1&6573&1&7168&7&7825&1&8477&2&9193&1&9770&1\\
5343&1&5975&1&6605&1&7195&2&7846&1&8508&1&9198&1&9792&2\\
5351&2&5978&2&6610&1&7202&2&7854&2&8540&1&9201&2&9799&2\\
5382&1&6009&1&6613&2&7233&1&7885&1&8545&1&9232&1&9830&1\\
5393&7&6030&1&{\bf6628}&{\bf20}&7265&1&7896&4&8548&2&9253&1&9862&1\\
5426&1&6038&2&6645&1&7270&1&7926&2&8579&1&9261&2&9867&1\\
5431&1&6069&1&6677&1&7273&2&7933&2&8600&1&9292&1&9870&2\\
5434&2&6080&4&6682&1&7304&1&7964&1&8608&2&9294&1&9901&1\\
5465&1&6084&7&6685&2&7325&1&7996&1&8639&1&9328&4&9922&1\\
5497&1&6111&2&6716&1&7333&2&8001&1&8650&4&9347&2&9930&2\\
5502&1&6118&2&6748&1&7364&1&8004&2&8705&2&9354&2&9961&1\\
5505&2&6149&1&6750&2&7375&4&8035&1&8712&2&9385&1&9972&4\\\cline{15-16}
5528&4&6181&1&6752&2&7405&2&8056&1&8743&1&9417&1\\
5561&1&6186&1&6781&2&7412&2&8064&2&8775&1&9422&1\\
5593&1&6189&2&6812&1&7443&1&8095&1&8780&1&9425&2\\
5598&1&6220&1&6844&1&7475&1&8106&7&8783&2&9456&1\\
5601&2&6241&1&6849&1&7480&1&8139&1&8814&1&9477&1\\\cline{1-14}
\end{tabular}
\vskip0.5cm
\caption{Non-zero positions moved back by nonchalant algorithm in iterations 5001-10000.}\label{TabNon2}
\end{table}

\begin{table}\centering
	\begin{tabular}{|c||cccccccccccccc|}\hline
$N_1$&199&207&208&210&211&233&235&270&314&339&342&345&443&460\\ \hline
	    $\mathtt{1}$&1&1&0&9&4&0&3&1&1&1&3&4&1&0\\
	    $\mathtt{2}$&2&1&0&8&5&0&3&0&1&0&4&4&1&1\\
	    $\mathtt{3}$&1&1&0&8&6&1&1&0&1&1&5&6&0&1\\
	    $\mathtt{13}$&1&1&1&8&6&0&1&0&1&1&5&6&0&1\\
	    $\mathtt{23}$&2&1&0&9&5&0&3&0&1&1&3&4&1&1\\
	    $\mathtt{32}$&2&1&0&8&5&0&3&0&1&0&4&4&1&1\\
	    $\mathtt{3213}$&1&1&1&8&6&0&1&0&1&1&5&6&0&1\\
	    $\mathtt{2313}$&2&0&0&8&5&0&2&0&0&1&5&7&1&1\\
	    $\mathtt{32132}$&1&1&0&9&6&0&2&0&1&0&4&1&0&0\\
	    $\mathtt{2313213}$&1&1&1&8&6&0&1&0&1&1&5&6&0&1\\
	    $\mathtt{231323}$&1&2&0&8&6&0&1&0&1&1&5&6&0&1\\
	    $\mathtt{32132313}$&2&0&0&9&5&0&3&0&0&1&5&7&1&1\\
	    $\mathtt{321323132}$&2&0&0&9&5&0&3&0&0&1&5&7&1&1\\
	    $\mathtt{32132313213}$&1&1&1&8&6&0&1&0&1&1&5&6&0&1\\
	    \hline
	\end{tabular}

\end{table}

	\begin{table}\centering
	\begin{tabular}{|c||cccccc|}\hline
$N_1$&489&544&659&663&688&806\\ \hline
	    $\mathtt{1}$&1&1&1&1&0&0\\
	    $\mathtt{2}$&0&1&0&1&1&0\\
	    $\mathtt{3}$&1&0&0&1&0&0\\
	    $\mathtt{13}$&1&0&0&1&0&0\\
	    $\mathtt{23}$&1&1&0&1&0&0\\
	    $\mathtt{32}$&0&1&0&1&1&0\\
	    $\mathtt{3213}$&1&0&0&1&0&0\\
	    $\mathtt{2313}$&0&0&0&0&0&0\\
	    $\mathtt{32132}$&0&1&0&1&0&1\\
	    $\mathtt{2313213}$&1&0&0&1&0&0\\
	    $\mathtt{231323}$&1&0&0&1&0&0\\
	    $\mathtt{32132313}$&0&0&0&0&0&0\\
	    $\mathtt{321323132}$&0&0&0&0&0&0\\
	    $\mathtt{32132313213}$&1&0&0&1&0&0\\
	    \hline
	    \end{tabular}
    \vskip0.5cm
	   	\caption{Numbers of occurences of distances (first row) between two consecutive iterations in which the nonchalant algorithm moved back exactly four positions, for various initial words (for 10 000 iterations of the algoritm).}\label{T2}
	    \end{table}
\newpage

\end{document}